\begin{document}

\newtheorem{definition}{Definition}[section]
\newtheorem{definitions}[definition]{Definitions}
\newtheorem{lemma}[definition]{Lemma}
\newtheorem{prop}[definition]{Proposition}
\newtheorem{theorem}[definition]{Theorem}
\newtheorem{cor}[definition]{Corollary}
\newtheorem{cors}[definition]{Corollaries}
\theoremstyle{remark}
\newtheorem{remark}[definition]{Remark}
\theoremstyle{remark}
\newtheorem{remarks}[definition]{Remarks}
\theoremstyle{remark}
\newtheorem{notation}[definition]{Notation}
\theoremstyle{remark}
\newtheorem{example}[definition]{Example}
\theoremstyle{remark}
\newtheorem{examples}[definition]{Examples}
\theoremstyle{remark}
\newtheorem{dgram}[definition]{Diagram}
\theoremstyle{remark}
\newtheorem{fact}[definition]{Fact}
\theoremstyle{remark}
\newtheorem{illust}[definition]{Illustration}
\theoremstyle{remark}
\newtheorem{rmk}[definition]{Remark}
\theoremstyle{definition}
\newtheorem{question}[definition]{Question}
\theoremstyle{definition}
\newtheorem{conj}[definition]{Conjecture}

\renewcommand{\marginpar}[2][]{}

\newcommand{\stac}[2]{\genfrac{}{}{0pt}{}{#1}{#2}}
\newcommand{\stacc}[3]{\stac{\stac{\stac{}{#1}}{#2}}{\stac{}{#3}}}
\newcommand{\staccc}[4]{\stac{\stac{#1}{#2}}{\stac{#3}{#4}}}
\newcommand{\stacccc}[5]{\stac{\stacc{#1}{#2}{#3}}{\stac{#4}{#5}}}

\renewenvironment{proof}{\noindent {\bf{Proof.}}}{\hspace*{3mm}{$\Box$}{\vspace{9pt}}}

\title{Categories of imaginaries for definable additive categories}
\author{Mike Prest,\\ Alan Turing Building
\\Department of Mathematics\\University of Manchester\\
Manchester M13 9PL\\UK\\mprest@manchester.ac.uk}

\maketitle

\tableofcontents

\section{Introduction}\label{secintro}\marginpar{secintro}

An additive category is {\bf definable} if it is equivalent to a {\bf definable subcategory} of a module category $ {\rm Mod}\mbox{-}{\mathcal R} $, meaning a full subcategory which is closed under direct products, direct limits ($=$ directed colimits) and pure subobjects.  Such a category ${\mathcal D}$ has associated to it a canonical model theory for its objects in the sense that each object $D\in {\mathcal D}$ becomes a structure for the associated language; as such, $D$ is a model of the associated theory and ${\mathcal D}$ is the category of models for that theory.  The model theory, moreover, is essentially just like the model theory of modules over a ring, hence very amenable, in particular the theory of ${\mathcal D}$ (the common theory of the objects of ${\mathcal D}$) has pp-elimination of quantifiers.  The associated category of pp-imaginaries is abelian and every small abelian category arises, up to equivalence of categories, in this way.  In this paper we pursue some themes in the model theory of additive structures which exploit the categorical view of these categories of pp-imaginaries.

If $R$ is a ring then the class of flat left $R$-modules is definable exactly when $R$ is right coherent; this is also the condition for the class of absolutely pure (also called fp-injective) right $R$-modules to be definable.  These two, dual, results are due to Eklof and Sabbagh (\cite{EkSab}, \cite{SabEk}).

If $R$ is right coherent then the category, ${\rm mod}\mbox{-}R$, of finitely presented right $R$-modules is a (skeletally small) abelian category and plays a key role in the above-mentioned theories.  Namely, it is (equivalent to) the category of pp-imaginaries for the theory of flat left $R$-modules and its opposite, $\big({\rm mod}\mbox{-}R\big)^{\rm op}$, is the category of pp-imaginaries for the theory of absolutely pure right modules.  In Section \ref{secARflat}, we identify the skeletally small abelian category which plays the same role in the non-coherent case.  We also extend the ``classical" case by replacing the ring $R$ by a small preadditive category (otherwise known as a ring with many objects or as a ringoid).  Recall that a {\bf preadditive category} is a category where each hom set is an abelian group and where composition is bilinear (that is, the equations $ f(g+h)=fg+fh $ and $ (g+h)f=gf+hf $ hold true whenever they make sense). Such a category is {\bf skeletally small} if, up to isomorphism, it has just a set of objects; in this case we often replace the category by a {\bf small} version - one with only a set of objects but with at least one in each isomorphism class. Since such a small version is equivalent (in the category-theoretic sense) to the original one, such replacement may be made without comment since it affects nothing of substance. An example is the category, $ {\rm mod}\mbox{-}R$, of finitely presented modules over a ring $ R$: this is skeletally small but not small. In setting up a language for $ ({\rm mod}\mbox{-}R)$-modules we would want to have just a set of sorts and function symbols, so we would replace $ {\rm mod}\mbox{-}R $ by a small version of it, but the languages obtained from different choices of small versions, though literally different, are entirely equivalent in their model theory and so we never distinguish between them.

If ${\mathcal R}$ is a skeletally small preadditive category then a {\bf left} ${\mathcal R}${\bf -module} is an additive functor from ${\mathcal R}$ to the category, ${\bf Ab}$, of abelian groups (and a {\bf right} ${\mathcal R}${\bf -module} is a contravariant functor, that is, a functor from ${\mathcal R}^{\rm op}$).  If ${\mathcal R}$ has just one object or, in effect equivalently, finitely many isomorphism classes of objects, then it is essentially a ring and ``module'' means what it usually does.  Although it is fairly straightforward to extend from $R$-modules to ${\mathcal R}$-modules, this kind of thing is seldom worked out in detail, so we take the opportunity to do so here, illustrating thereby the general process.

I have tried to include enough background to make this paper accessible to algebraists and model-theorists.  I have made heavy use of \cite{PreNBK} as a convenient reference which gathers together much of the background material (\cite{PreDefAddCat} is an alternative).

Thanks to Antongiulio Fornasiero, Sonia L'Innocente and Giuseppina Terzo for pointing out a slip in Section 5, where I should have referred only to pp-, not general, imaginaries.

\section{Model theory over preadditive categories}\label{secmodRR}\marginpar{secmodRR}

First we recall some basic definitions from the model theory of modules over a ring $R$.  A homogeneous system of linear equations $ \overline{x}H=0 $ with coefficients (i.e.~entries of the matrix $H$) from $ R $ defines, in each $ R$-module $M$, a subgroup (of $M^n$, where $n$ is the length of $\overline{x}$) - the solution set of the system. If we project out some coordinates then the result is defined by a condition of the form $ \exists \overline{y} \,\, \overline{x}\overline{y}H=0 $ (different $ \overline{x}$ and where $\overline{x}\overline{y}$ is the row vector consisting of the entries of $\overline{x}$ followed by those of $\overline{y}$) - that's a {\bf pp formula}, typical notation for which is $ \phi  $ or, if we want to show the {\bf free}(=unquantified) {\bf variables}, $ \phi (\overline{x})$.  Again, the solution set in any module $M$ - we write $\phi(M)$ for that - is a subgroup of the appropriate power of $M$.  Write $ \psi \leq \phi  $ if $ \psi (M)\leq \phi (M) $ for every module $ M$ (checking for $M$ finitely presented is enough, see, e.g.~\cite[1.2.23]{PreNBK}); such a {\bf pp-pair} is said to be {\bf closed on} a module $ M $ if $ \psi (M)=\phi (M)$, otherwise it is {\bf open on} $ M$. Given a set, $ \{ \psi _\lambda \leq \phi _\lambda \}_\lambda  $, of pp-pairs, the class of modules $ M $ on which each of these pairs is closed is a {\bf definable subclass} of $ {\rm Mod}\mbox{-}R$ (and the full subcategory on these is a typical definable subcategory); note that the modules in this class are exactly those which satisfy the axioms $ \forall \overline{x} \big( \phi _\lambda (\overline{x})\rightarrow \psi _\lambda (\overline{x})\big) $ - a set of coherent (indeed regular) sentences in the first order theory of $ R$-modules.  These are exactly the axiomatisable subclasses which are closed under direct sums and direct summands.  Recall that these classes of modules are in bijection with the closed sets of the Ziegler spectrum of $ R$ and, if we allow a skeletally small preadditive category ${\mathcal R}$ in place of $R$, we obtain arbitrary definable additive categories (for background we refer to, for instance, \cite{Zie}, \cite{PreBk}, \cite{PreNBK}).

From the point of model theory the key role of the pp formulas among the others is explained by the fact that every formula reduces to a boolean combination of them; equivalently, every definable set is a finite boolean combination of solution sets of pp formulas.  We write $\chi(M)$ for the solution set in a module $M$ of any formula $\chi$; if $\psi\leq \phi$ then we write $(\phi/\psi)(M)$ for the quotient group $\phi(M)/\psi(M)$.

\begin{theorem}\label{ppeq}\marginpar{ppeq} (pp-elimination of quantifiers) (see \cite[p.~36]{PreBk} for references) Let $ R $ be any ring.

\noindent (1) If $ \sigma  $ is a sentence in the language of $ R$-modules then
there is a
finite boolean combination, $ \tau  $, of sentences of the form $ {\rm card}(\phi(-)
/\psi(-) )\geq m, $ where $ \phi \geq \psi  $ are pp conditions and $ m $ is a positive integer,
such that $ \sigma  $ is equivalent to $ \tau  $ in the sense that for every
$ R$-module $M$, $ \sigma  $ is true in $M$ iff $ \tau $ is true in $M$.

\noindent (2) If $ \chi $ is any formula in the
language of $ R$-modules then there is a sentence, $ \tau  $, and a finite boolean
combination, $ \eta $, of pp formulas such that for every
module $ M $ and tuple $
\overline{a} $ from $ M $ (matching the free variables of $ \chi$) we have $
 \overline{a} \in \chi (M) $ iff both $ \tau  $ is true in $M$ and $ \overline{a}\in \eta
(M). $ In particular, the solution set to $ \chi $ in every module $ M
$ is a finite boolean combination (i.e.~using $\cap$, $\cup$, $^c$) of pp-definable subgroups.  If non-zero constants are allowed in $\chi$ then the solution set will be a finite boolean combination of cosets of pp-definable subgroups.
\end{theorem}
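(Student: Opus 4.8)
The plan is to prove (2) by induction on the structure of $\chi$ and then to read off (1) as the case of no free variables. Atomic formulas in the language of $R$-modules are, up to logical equivalence, homogeneous linear equations $\sum_i x_i r_i = 0$, hence quantifier-free pp formulas, and the class of finite boolean combinations of pp formulas is by definition closed under $\wedge$, $\vee$ and negation; so the only inductive step requiring work is the existential quantifier $\chi(\overline{x}) = \exists y\,\theta(\overline{x},y)$ with $\theta$ inductively a boolean combination of pp formulas (the universal case being dual). Put $\theta$ in disjunctive normal form, distribute $\exists y$ over the disjunction, merge the positive pp conjuncts of each disjunct into a single pp formula (pp formulas are closed under $\wedge$; if there are none, use $y=y$), and replace each negated conjunct $\neg\psi$ by $\neg(\psi\wedge\phi)$. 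This reduces everything to treating $\chi(\overline{x}) = \exists y\,\big(\phi(\overline{x},y)\wedge\bigwedge_{j=1}^{n}\neg\psi_j(\overline{x},y)\big)$ with each $\psi_j\leq\phi$.

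Next I pass to an abelian-group picture. Fix a module $M$ and a tuple $\overline{a}$. Since $\phi$ is pp, $\phi(M)$ is a subgroup of $M^{|\overline{x}|+1}$, so the fibre $\{b : M\models\phi(\overline{a},b)\}$ is empty or a coset of the subgroup $H := \{b : M\models\phi(\overline{0},b)\}$, which does not depend on $\overline{a}$; likewise the $\psi_j$-fibre over $\overline{a}$ is empty or a coset of $K_j := \{b : M\models\psi_j(\overline{0},b)\}\leq H$. Thus $M\models\chi(\overline{a})$ says exactly that the $\phi$-fibre over $\overline{a}$ is \emph{not} covered by the union of the $\psi_j$-fibres over $\overline{a}$. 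Nonemptiness of the $\phi$-fibre, and of the intersection of the $\psi_j$-fibres for $j$ ranging over any $S\subseteq\{1,\dots,n\}$, is captured by the pp conditions $\exists y\,\phi(\overline{x},y)$ and $\exists y\,\bigwedge_{j\in S}\psi_j(\overline{x},y)$ on $\overline{a}$ (conjunctions and existential quantifications of pp formulas are pp).

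The heart of the argument is the covering lemma of B.~H.~Neumann: if a coset of $H$ is covered by finitely many cosets of subgroups of $H$, then it is already covered by those summands whose subgroup has finite index in $H$, and in an irredundant such cover the indices involved are bounded by a function of the number of cosets. Using the first part I discard the $\psi_j$ with $[H:K_j]=\infty$, pass to the finite group $\overline{H}:=H/\bigcap_j K_j$, and apply inclusion--exclusion to the images $\overline{B_j}$ of the remaining $\psi_j$-fibres over $\overline{a}$: the quantity $\big|\,\overline{H}\setminus\bigcup_j\overline{B_j}\,\big|$ is an integer combination, with coefficients the indices $[\phi(\overline{0},M):(\bigwedge_{j\in S}\psi_j)(\overline{0},M)]$, of the truth-values at $\overline{a}$ of the pp conditions $\exists y\,\bigwedge_{j\in S}\psi_j(\overline{x},y)$. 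Those indices are exactly values of invariants $\mathrm{card}\big(\phi'(-)/\psi'(-)\big)$ for suitable pp conditions $\phi'\geq\psi'$ (take $\phi'(\overline{x},y)=\phi(\overline{0},y)$ and $\psi'(\overline{x},y)=(\bigwedge_{j\in S}\psi_j)(\overline{0},y)$, with $\overline{x}$ a dummy), and by the quantitative part of Neumann's lemma only those invariants with the integer below a bound depending on $\chi$ are relevant. Hence finitely many invariant sentences of the stated form, together with their negations, partition the class of modules into finitely many cases, and within each case the displayed integer combination being $>0$ rather than $=0$ is a fixed boolean function of the $\overline{a}$-truth-values of the finitely many pp conditions above. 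Collecting the cases gives that $\chi$ is equivalent, modulo the theory of $R$-modules, to a boolean combination of pp formulas and invariant sentences, which is (2): assemble the invariant sentences into $\tau$ and the pp part into $\eta$. For (1), specialise to a sentence: a pp sentence $\exists\overline{x}\,(\text{linear system})$ holds in every module (witnessed by $\overline{x}=\overline{0}$), so the pp part is vacuous and only the boolean combination of sentences $\mathrm{card}(\phi(-)/\psi(-))\geq m$ survives.

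I expect the combinatorial lemma to be the main obstacle: Neumann's lemma itself is elementary, but care is needed to see (i) that the resulting boolean combination is genuinely \emph{finite}, which is where the quantitative bound on indices in irredundant covers is used so that only invariants $\mathrm{card}(\phi'(-)/\psi'(-))\geq m$ with $m$ bounded in terms of $\chi$ enter, and (ii) that deciding non-covering of a coset by finitely many sub-cosets needs no ``position'' data beyond which intersections of the fibres are nonempty, which is precisely what the inclusion--exclusion identity supplies. The remaining ingredients --- reduction to disjunctive normal form, closure of pp formulas under $\wedge$, $+$ and existential quantification, and the fibre-as-coset description --- are routine, and the argument is insensitive to whether $R$ is an honest ring or a small preadditive category.
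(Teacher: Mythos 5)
The paper does not actually prove this statement---it is the classical Baur--Monk pp-elimination theorem, quoted with a pointer to \cite{PreBk}---and your argument is exactly the standard proof found there: reduce by disjunctive normal form to $\exists y\,\bigl(\phi\wedge\bigwedge_{j}\neg\psi_j\bigr)$ with $\psi_j\leq\phi$, view the fibres over $\overline{a}$ as cosets of the subgroups $\phi(\overline{0},M)\geq\psi_j(\overline{0},M)$, and use Neumann's lemma together with its index bound and inclusion--exclusion to decide non-covering from boundedly many invariant sentences plus the pp conditions $\exists y\,\bigwedge_{j\in S}\psi_j(\overline{x},y)$; this is correct, including the deduction of (1) from the fact that pp sentences are trivially true. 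The one loose point is your last ``assemble into $\tau$ and $\eta$'' step: what the case analysis genuinely produces is an equivalence of $\chi$, modulo the theory of all $R$-modules, with a finite disjunction $\bigvee_i(\tau_i\wedge\eta_i)$ (equivalently, a boolean combination of invariant sentences and pp formulas) rather than a single conjunction $\tau\wedge\eta$---which is the standard form of the theorem, suffices for the ``in particular'' clause actually used later (in each fixed $M$ only the $\eta_i$ with $\tau_i$ true survive), and also gives the constants clause once one notes that with parameters the atomic and pp conditions define cosets, to which the whole coset/Neumann argument applies verbatim.
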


\vspace{4pt}

Extending to modules over rings, ${\mathcal R}$, with more than one object entails that modules have to be treated as having more than one sort of element:  for each object $P$ in the category ${\mathcal R}$ we introduce a sort. (In fact all we need is at least one object $P$ from each isomorphism type of object in ${\mathcal R}$ - so we can assume that ${\mathcal R}$ is small.)  There are two ways to describe the elements of a given sort of a module $M$, depending on how we choose to view modules.

Literally, a right module over ${\mathcal R}$ is a contravariant additive functor, that is, a functor from ${\mathcal R}^{\rm op}$, to ${\bf Ab}$.  So a right ${\mathcal R}$-module $M$ assigns to each object $P$ of ${\mathcal R}$ an abelian group $MP$ - that will be the set of elements of $M$ of sort $P$ - and assigns to each arrow $r:P\rightarrow Q$ in ${\mathcal R}$ a homomorphism $Mr:MQ\rightarrow MP$ of abelian groups.  The language has, for each sort, a symbol for addition in and for the zero object of that sort (so as to capture the abelian group structure of $MP$) and has, for each arrow $r$, a unary function symbol (which, for simplicity, we also denote $r$) from sort $MQ$ to sort $MP$ - the interpretation of which is, of course, the function $Mr$.  In the case of left = covariant modules the function symbol corresponding to the arrow $r$ would go from the sort corresponding to $P$ to the sort corresponding to $Q$.

If ${\mathcal R}$ is actually a ring, that is has just one object, $\ast$ say, so all the arrows go from $\ast$ to $\ast$, then we can write $R=(\ast,\ast)$ for the endomorphism ring of that object.  A module $M$ then consists of a single abelian group $M\ast$ together with, for each $r\in R$, an endomorphism, multiplication-by-$r$.  One may check that the conditions for being a (contravariant) functor are exactly those for this data to describe a (right) module.

In the general case, if we think of the image of the functor as a structure then it is a ``multi-sorted module", meaning that for each sort (fix $P$) we have a module over some specified ring (${\rm End}(P)$) and then we have a specified pattern of abelian group homomorphisms (the various $r$) between these.

An alternative view of an $R$-module is based on the standard isomorphism $M_R\simeq (R_R,M_R)_R$ - the isomorphism between a right $R$-module $M_R$ and the group of $R$-module homomorphisms from the free module, $R_R$, to $M_R$ (this group of homomorphisms has a natural right $R$-module structure induced by the left action of $R$ on itself).  Thus the elements of $M$ of the unique sort are homomorphisms with a given domain ($R_R$).  To see how this generalises, one has to note that the projective module $R_R$ is the representable functor $(-, \ast)$ on the ring viewed as a 1-object category.  So, for a general ${\mathcal R}$ we replace each object $P$ of ${\mathcal R}$ by the corresponding representable functor $(-,P)$ - this is a projective right module ${\mathcal R}$-module - and we regard the elements of $M$ of sort $P$ as the elements of the group $((-,P),M)$ of ${\mathcal R}$-module homomorphisms.  That $((-,P),M)$ is naturally isomorphic to $MP$ is exactly the Yoneda Lemma, so the two views are equivalent.

To be clear about sorting of variables:  each variable $x$ of the language that we use has a specified sort, with variables of sort (corresponding to) $P$ being replaceable by elements of that sort; solution sets of formulas will now be subgroups of products of the form $MP_1 \times \dots \times MP_n$ where $P_1,\dots, P_n$ are objects of ${\mathcal R}$ (that product will be isomorphic to $M(P_1\oplus \dots \oplus P_n)$ if the direct sum $P_1\oplus \dots \oplus P_n$ happens to exist in ${\mathcal R}$).  For a simple example of a pp formula with free variable $x_P$ of sort $P$ we have $\exists y_Q (x_P =y_Qr)$ if $r:P\rightarrow Q$ is an arrow in ${\mathcal R}$; the solution set in a right module $M$ is $Mr$, meaning the image of the arrow $Mr:MQ\rightarrow MP$.  Everything goes as in the ring/one-object case with only minor modifications (more details can be found at \cite[\S10.2.4, Appx.~B]{PreNBK} and there will be further illustrations here).

\section{Flat and absolutely pure modules}\label{secflat}\marginpar{secflat}

Suppose that $R$ is a ring.  We say that a right $R$-module $M$ is {\bf flat} if it satisfies the following equivalent conditions (which may be found in many sources, for instance \cite[\S\S 2.1, 2.3]{PreNBK}).

\begin{theorem}\label{charflat}\marginpar{charflat} The following conditions on a module $M_R$ are equivalent.

\noindent (i)  If $f:A\rightarrow B$ is a monomorphism between (finitely presented) left $R$-modules then the morphism $1_M\otimes f: M\otimes A \rightarrow M\otimes B$ is a monomorphism of abelian groups, where $\otimes$ means $\otimes _R$.

\noindent (ii)  The functor $(M,-): {\rm mod}\mbox{-}R \rightarrow {\bf Ab}$ is exact.

\noindent (iii)  The module $M$ is a direct limit of projective right $R$-modules.
\end{theorem}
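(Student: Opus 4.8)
The plan is to run the standard cycle of implications, isolating all the real content in the single step $(\mathrm{i})\Rightarrow(\mathrm{iii})$ --- the Govorov--Lazard theorem --- and to argue throughout with elements, exactly as for modules over a ring, so that nothing extra is needed for the later passage to a ringoid.

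The equivalence $(\mathrm{i})\Leftrightarrow(\mathrm{ii})$ is the standard reduction: since $M\otimes_R-$ is right exact, (ii) is exactly the assertion that $M\otimes_R-$ preserves monomorphisms, and (i) is that assertion restricted to monomorphisms between finitely presented left modules. That it suffices to test on finitely presented modules is formal --- one uses that every left module is a directed colimit of finitely presented ones, that $M\otimes_R-$ and $\mathrm{Tor}_1^R(M,-)$ commute with directed colimits, and that directed colimits in $\mathbf{Ab}$ are exact --- so that each of (i), (ii) is equivalent to the vanishing of $\mathrm{Tor}_1^R(M,-)$, equivalently to $M\otimes_R I\to M$ being a monomorphism for every finitely generated left ideal $I$; I treat these equivalences as known. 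The implication $(\mathrm{iii})\Rightarrow(\mathrm{ii})$ is equally soft: a free right module $R^{(\Lambda)}$ is flat, since $R^{(\Lambda)}\otimes_R-$ is the exact functor $A\mapsto A^{(\Lambda)}$, hence so is any projective right module; and if $M=\varinjlim_\mu P_\mu$ with each $P_\mu$ flat then $M\otimes_R-\cong\varinjlim_\mu(P_\mu\otimes_R-)$ is a directed colimit of exact functors into $\mathbf{Ab}$, therefore exact.

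The substance is $(\mathrm{i})\Rightarrow(\mathrm{iii})$. Present $M$ as the colimit of its canonical diagram of finitely generated free modules over it: the objects are the $R$-linear maps $\overline m\colon R^n\to M$, equivalently the tuples $\overline m=(m_1,\dots,m_n)\in M^n$, and the morphisms $(R^n,\overline m)\to(R^{n'},\overline m')$ are the $R$-linear maps $R^n\to R^{n'}$ commuting with the structure maps to $M$; this colimit is manifestly $M$. It therefore remains to show that (i) forces the diagram to be \emph{directed}, for then the colimit presenting $M$ becomes a directed colimit of finitely generated free modules, which is (iii). Joint upper bounds are immediate, $(R^n,\overline m)$ and $(R^{n'},\overline m')$ both mapping into $(R^{n+n'},\overline m\,\overline m')$. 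For a parallel pair $f,g\colon(R^n,\overline m)\rightrightarrows(R^{n'},\overline m')$ one has $\overline m'\circ(f-g)=0$, so $K:=\mathrm{im}(f-g)$ is a finitely generated submodule of $\ker(\overline m'\colon R^{n'}\to M)$, and it is enough to factor $\overline m'$ through some finitely generated free $R^{n''}$ by a map $\pi$ with $K\subseteq\ker\pi$: that $\pi$ is then a morphism of the diagram coequalising $f$ and $g$. Such a factorisation is exactly the equational criterion for flatness, and here is where (i) is genuinely used. Given a relation $\sum_i m_i x_i=0$ in $M$ with $x_i\in R$, let $\phi\colon R^n\to R$ be the left-module homomorphism $e_i\mapsto x_i$, with image the finitely generated left ideal $\sum_i Rx_i$ and kernel $Z\subseteq R^n$, the module of relations $\sum_i r_i x_i=0$. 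The fact that $M\otimes_R\big(\sum_i Rx_i\big)\to M$ is a monomorphism --- one of the reformulations of (i) noted above --- identifies the kernel of $1_M\otimes\phi\colon M^n\to M$ with the image of $M\otimes_R Z\to M^n$, and unwinding this identification produces $m'_j\in M$ and $y_{ji}\in R$ with $m_i=\sum_j m'_j y_{ji}$ and $\sum_i y_{ji}x_i=0$ for each $j$ --- precisely a factorisation of the required kind for that relation. One reaches all of $K$ by treating its generators one at a time and composing the resulting maps.

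The main obstacle is exactly this last implication, and within it the equational criterion: extracting from (i) that every relation among finitely many elements of $M$ --- indeed every finitely generated submodule of the kernel of a homomorphism from a finitely generated free module --- is resolved further along the canonical diagram. That step is short but genuinely uses flatness, not merely formal manipulation; once it is in hand, directedness of the diagram, and hence the presentation $M=\varinjlim(\text{finitely generated free modules})$, is routine, as are the two equivalences above. Over a ringoid $\mathcal R$ in place of $R$ the same argument goes through verbatim, with ``finitely generated free module'' read as ``finite direct sum $\bigoplus_k(-,P_k)$ of representable functors'' and ``element of sort $P$'' read as in Section~\ref{secmodRR}; for the ring case as stated, one may instead simply cite the classical accounts referred to above.
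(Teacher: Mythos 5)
The paper itself offers no proof of this theorem: it is quoted as background, with the equivalences referred to standard sources (\cite[\S\S 2.1, 2.3]{PreNBK}), so there is no argument of the paper's to match yours against. Your route is the classical one --- isolate the content in (i)$\Rightarrow$(iii) (Govorov--Lazard), prove it by showing the canonical diagram of finitely generated free modules over $M$ is directed, and obtain the coequalising maps from the equational criterion --- and the core of that argument is correct: the identification of $\ker(1_M\otimes\phi)$ with the image of $M\otimes_R Z\to M^n$, the extraction of the $y_{ji}$, and the treatment of the generators of $K$ one at a time, composing the resulting maps, are all as they should be, as is (iii)$\Rightarrow$ exactness of $M\otimes_R-$.

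Two points do need attention. First, condition (ii) as printed is about $(M,-)\colon {\rm mod}\mbox{-}R\to{\bf Ab}$, i.e.\ ${\rm Hom}(M,-)$ restricted to finitely presented \emph{right} modules; you silently replaced it by exactness of $M\otimes_R-$ and never address the printed condition. Read literally it is not equivalent to flatness: over $\mathbb{Z}$ take $M$ the group of $p$-adic integers, which is flat, yet ${\rm Hom}(M,\mathbb{Z})=0$ while ${\rm Hom}(M,\mathbb{Z}/p^n)\neq 0$, so $(M,-)$ fails to preserve the epimorphism in $0\to\mathbb{Z}\xrightarrow{p^n}\mathbb{Z}\to\mathbb{Z}/p^n\to 0$. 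So the tensor-functor reading is surely the intended one, but you should say explicitly that you are reading (ii) that way rather than proving a different statement without comment. Second, your claim that it is ``formal'' that testing (i) on monomorphisms \emph{between finitely presented} left modules suffices is not right as sketched. Tor commuting with directed colimits does reduce flatness to ${\rm Tor}_1(M,C)=0$ for all finitely presented $C$; but to get that from (i) restricted to f.p.-to-f.p.\ monomorphisms you would need the syzygy $K$ in $0\to K\to R^k\to C\to 0$ to be finitely presented, which fails over non-coherent rings, and the inclusion $\sum_iRx_i\to R$ that your equational-criterion step actually uses is likewise not a monomorphism between finitely presented modules; nor is an arbitrary monomorphism obviously a directed colimit of such (the natural f.p.\ approximations of a finitely generated, non-finitely-presented submodule fail to be injective). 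What is formal is the reduction to monomorphisms \emph{with finitely presented cokernel} (equivalently, to finitely generated left ideals), via free covers of finitely presented modules. So your (i)$\Rightarrow$(iii) is complete if (i) is read for arbitrary monomorphisms of left modules; under the parenthetical reading it has a gap at its very first step, and the equivalence of the two readings is precisely what would need a separate argument or citation.
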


The dual (in various senses) notion is that of an {\bf absolutely pure} module, which is as follows.

\begin{theorem}\label{charabs}\marginpar{charabs}   The following conditions on a module $M_R$ are equivalent.

\noindent (i) Every monomorphism of $R$-modules with domain $M$ is pure (for which see just below).

\noindent (ii) The module $M$ is a pure submodule of its injective hull.

\noindent (iii) For every monomorphism $j:C\rightarrow D$ of right $R$-modules with finitely presented cokernel, every morphism $C\rightarrow M$ factors through $j$.

\noindent (iv)  The functor ${\rm Ext}^1_R(-,M): {\rm mod}\mbox{-}R \rightarrow {\bf Ab}$ is exact.
\end{theorem}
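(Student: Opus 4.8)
The plan is to establish the four conditions equivalent via the implications (i)$\Leftrightarrow$(ii), (i)$\Rightarrow$(iii), (iii)$\Rightarrow$(ii), and (i)$\Leftrightarrow$(iv), relying on two standard facts from the theory of purity (see e.g.~\cite[\S2.1]{PreNBK}): that a short exact sequence $0\to A\to B\to C\to 0$ is pure iff $(G,-)$ carries it to an exact sequence of abelian groups for every $G\in{\rm mod}\mbox{-}R$, and, as a consequence, that a pure monomorphism with finitely presented cokernel splits. The equivalence (i)$\Leftrightarrow$(ii) is immediate: applying (i) to $M\hookrightarrow E(M)$ gives (ii), while conversely, given any monomorphism $g\colon M\to N$, injectivity of $E(M)$ extends $M\hookrightarrow E(M)$ along $g$, so the pure monomorphism $M\hookrightarrow E(M)$ factors through $g$, and since a homomorphism carries solutions of a pp formula to solutions, a monomorphism through which a pure monomorphism factors is itself pure; thus $g$ is pure.

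For (i)$\Rightarrow$(iii): given $j\colon C\to D$ monic with $D/C$ finitely presented and $h\colon C\to M$, form the pushout of $j$ and $h$; the induced map $M\to P$ is monic with cokernel isomorphic to $D/C$, hence pure by (i), hence split, and composing a retraction $P\to M$ with the pushout map $D\to P$ yields a morphism $D\to M$ whose restriction along $j$ is $h$. For (iii)$\Rightarrow$(ii): to see that $M\hookrightarrow E(M)$ is pure it suffices to show $(G,E(M))\to(G,E(M)/M)$ is onto for each $G\in{\rm mod}\mbox{-}R$; given $\alpha\colon G\to E(M)/M$, pull back the surjection $E(M)\to E(M)/M$ along $\alpha$ to get a short exact sequence $0\to M\to P\to G\to 0$ with $G$ finitely presented, use (iii) (applied to $M\to P$ and $1_M$) to split it, and compose the splitting $G\to P$ with the projection $P\to E(M)$ to obtain a lift of $\alpha$. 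Alternatively, and more in the spirit of this paper, (iii)$\Rightarrow$(i) can be proved directly: given a monomorphism $g\colon M\to N$, a pp formula $\phi(\overline{x})\equiv\exists\overline{y}\,(\overline{x}H_1+\overline{y}H_2=0)$ and $\overline{a}$ from $M$ with $g\overline{a}\in\phi(N)$, let $D$ be the finitely presented module on the entries of $\overline{x}$ and $\overline{y}$ subject to $\overline{x}H_1+\overline{y}H_2=0$ and $C\le D$ the submodule generated by the entries of $\overline{x}$, so $D/C$ is finitely presented; the assignment $\overline{x}\mapsto\overline{a}$ defines a morphism $C\to M$ (well defined exactly because $g$ is monic, which is where that hypothesis enters), and a factorisation of it through $C\hookrightarrow D$, provided by (iii), gives values for $\overline{y}$ in $M$ witnessing $\overline{a}\in\phi(M)$.

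Finally I would treat (iv) through the long exact sequence for ${\rm Ext}$. With (i)--(iii) equivalent, purity of $0\to M\to E(M)\to E(M)/M\to 0$ makes $(G,E(M))\to(G,E(M)/M)$ onto for every $G\in{\rm mod}\mbox{-}R$, so the connecting map into ${\rm Ext}^1_R(G,M)$ vanishes and, $E(M)$ being injective, ${\rm Ext}^1_R(G,M)=0$ for all finitely presented $G$; conversely, given that vanishing, the fragment $(D,M)\to(C,M)\to{\rm Ext}^1_R(D/C,M)=0$ of the long exact sequence delivers (iii) for every monomorphism $C\to D$ with $D/C$ finitely presented. The point at which I expect to have to be careful is simply the reading of the phrase ``${\rm Ext}^1_R(-,M)$ is exact on ${\rm mod}\mbox{-}R$'': it is to be understood as the vanishing ${\rm Ext}^1_R(G,M)=0$ for all $G\in{\rm mod}\mbox{-}R$, which makes the functor (trivially) exact --- the converse being automatic when $R$ is right coherent, since then ${\rm mod}\mbox{-}R$ is abelian and every finitely presented module is the cokernel of a map between finite-rank free modules, and being the intended reading in the non-coherent case. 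I would make this reconciliation explicit.
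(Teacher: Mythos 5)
The paper offers no proof of Theorem \ref{charabs}: it is recalled as a standard characterisation of absolutely pure (fp-injective) modules, with the reader pointed to sources such as \cite[\S\S 2.1, 2.3]{PreNBK}, so there is no in-paper argument to compare against. Your proof is the standard one and is correct: (i)$\Leftrightarrow$(ii) via injectivity of $E(M)$ together with the (correct) observation that if a pure monomorphism factors as $hg$ then $g$ is pure; (i)$\Rightarrow$(iii) by pushout plus splitting of pure monomorphisms with finitely presented cokernel; (iii)$\Rightarrow$(ii) by pullback, and your direct (iii)$\Rightarrow$(i) via the free realisation of $\phi$ is also sound --- in particular you put your finger on exactly where monicity of $g$ is used, namely in checking that $x_i\mapsto a_i$ is well defined on the submodule generated by $\overline{x}$. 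Your treatment of (iv) is the right call: the standard content of that condition is ${\rm Ext}^1_R(G,M)=0$ for all $G\in {\rm mod}\mbox{-}R$, which trivially makes the functor exact, and your long-exact-sequence arguments in both directions are fine. One small precision: in justifying that literal exactness on ${\rm mod}\mbox{-}R$ forces the vanishing when $R$ is right coherent, the relevant point is not that $G$ is a cokernel of a map of free modules (true over any ring) but that coherence makes the kernel $K$ in $0\rightarrow K\rightarrow R^n\rightarrow G\rightarrow 0$ finitely presented, so that this sequence lies in ${\rm mod}\mbox{-}R$ and left exactness of ${\rm Ext}^1_R(-,M)$ on it gives ${\rm Ext}^1_R(G,M)\hookrightarrow {\rm Ext}^1_R(R^n,M)=0$; for non-coherent $R$ the literal exactness reading is indeed weaker on its face, which is why reading (iv) as the vanishing statement, as you do, is the appropriate reconciliation.
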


We say that a monomorphism is {\bf pure} if it satisfies the following equivalent conditions.

\begin{theorem}\label{charpure}\marginpar{charpure} The following conditions on a monomorphism $j:M\rightarrow N$ of right $R$-modules are equivalent.

\noindent (i)  For every (finitely presented) left $R$-module $L$ the morphism $j\otimes 1_L: M\otimes L \rightarrow N\otimes L$ is monic.

\noindent (ii)  The morphism $j$ is a direct limit of split monomorphisms.

\noindent (iii)  For every pp formula $\phi(x_1,\dots,x_n)$ we have $\phi(M)=M^n\cap \phi(N)$ (on identifying $M$ with its image in $N$).

\noindent (iv)  As (iii) but for pp formulas with one free variable.
\end{theorem}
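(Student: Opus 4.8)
The plan is to establish the cycle (ii)$\Rightarrow$(i)$\Rightarrow$(iii)$\Rightarrow$(ii), together with the equivalence (iii)$\Leftrightarrow$(iv). For (ii)$\Rightarrow$(i): a split monomorphism stays split, hence monic, after applying $-\otimes_R L$, and $-\otimes_R L$ (being a left adjoint) commutes with direct limits; since direct limits are exact in ${\bf Ab}$, a direct limit of monomorphisms is monic. So if $j$ is a direct limit of split monomorphisms then $j\otimes 1_L$ is a direct limit of monomorphisms, hence monic. (Writing an arbitrary $L$ as a direct limit of finitely presented left modules and using again that $-\otimes_R L$ commutes with direct limits shows it is immaterial whether (i) quantifies over all $L$ or only the finitely presented ones.)

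For (i)$\Rightarrow$(iii): the inclusion $\phi(M)\subseteq M^n\cap\phi(N)$ always holds, since homomorphisms preserve pp formulas. For the reverse inclusion I would use the standard dictionary between pp formulas for right modules and finitely presented left modules (see \cite[\S 1.3]{PreNBK}; the relevant left module is an appropriate cokernel of the matrix defining $\phi$, read as a map between finitely generated free left modules): to a pp formula $\phi(x_1,\dots,x_n)$ there is associated a finitely presented left $R$-module $L_\phi$ and a tuple $c_1,\dots,c_n\in L_\phi$ such that, for every right module $K$ and every $\overline a\in K^n$, one has $\overline a\in\phi(K)$ if and only if $\sum_{i=1}^n a_i\otimes c_i=0$ in $K\otimes_R L_\phi$, this characterisation being natural in $K$. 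Now suppose $\overline a\in M^n\cap\phi(N)$. Then $\sum_i a_i\otimes c_i=0$ in $N\otimes_R L_\phi$; since $j\otimes 1_{L_\phi}$ is injective and (by naturality of the above) carries $\sum_i a_i\otimes c_i$ computed in $M\otimes_R L_\phi$ to the same element computed in $N\otimes_R L_\phi$, namely $0$, we deduce $\sum_i a_i\otimes c_i=0$ in $M\otimes_R L_\phi$, that is, $\overline a\in\phi(M)$.

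For (iii)$\Rightarrow$(ii): write the cokernel $N/M$ as a direct limit $\varinjlim_\lambda C_\lambda$ of finitely presented modules and form the pullbacks $N_\lambda:=N\times_{N/M}C_\lambda$, obtaining short exact sequences $0\to M\to N_\lambda\to C_\lambda\to 0$; since filtered colimits are exact in a module category, $N=\varinjlim_\lambda N_\lambda$ and $j=\varinjlim_\lambda(M\to N_\lambda)$, so it suffices to show each $M\to N_\lambda$ splits. Choose a finite presentation $R^s\xrightarrow{F}R^t\to C_\lambda\to 0$ and lift the composite $R^t\to C_\lambda\to N/M$ through $N\twoheadrightarrow N/M$ to some $\overline n\in N^t$; writing $F$ as the corresponding $t\times s$ matrix, the tuple $\overline m:=\overline n F$ then lies in $M^s$, and the sequence $0\to M\to N_\lambda\to C_\lambda\to 0$ splits as soon as $\overline m=\overline m'F$ for some $\overline m'\in M^t$ (replacing $\overline n$ by $\overline n-\overline m'$ then yields a map $R^t\to N$ that kills the relations and so induces a section $C_\lambda\to N$). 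Now $\overline m\in\phi(N)$ for the pp formula $\phi(\overline y):=\exists\overline x\,(\overline xF=\overline y)$, witnessed by $\overline x=\overline n$, and $\overline m$ is a tuple from $M$; so (iii) gives $\overline m\in\phi(M)$, which is exactly the existence of the required $\overline m'\in M^t$.

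Finally, (iii)$\Rightarrow$(iv) is just the case $n=1$, while (iv)$\Rightarrow$(iii) goes by induction on the number $n$ of free variables: given a pp formula $\phi(x_1,\dots,x_n)$ and $\overline a\in M^n\cap\phi(N)$, apply (iv) to the one-variable pp formula $\exists x_1\cdots\exists x_{n-1}\,\phi$ and the element $a_n$ to obtain $a_1',\dots,a_{n-1}'\in M$ with $(a_1',\dots,a_{n-1}',a_n)\in\phi(M)$; since $\phi(N)$ is a subgroup, $(a_1-a_1',\dots,a_{n-1}-a_{n-1}')$ is then a tuple from $M$ lying in $\theta(N)$, where $\theta(x_1,\dots,x_{n-1}):=\phi(x_1,\dots,x_{n-1},0)$ is pp in $n-1$ free variables, so by the inductive hypothesis it lies in $\theta(M)$, i.e.\ $(a_1-a_1',\dots,a_{n-1}-a_{n-1}',0)\in\phi(M)$, and adding back $(a_1',\dots,a_{n-1}',a_n)$ gives $\overline a\in\phi(M)$. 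I expect the main obstacle to be the step (i)$\Rightarrow$(iii): it rests entirely on the correspondence between pp formulas (in several free variables) and finitely presented left modules carrying a distinguished tuple, which is precisely where the syntactic description of definable subgroups is converted into the algebra of tensor products; once that dictionary is in place the remaining steps are routine manipulations of direct limits and short exact sequences. (A minor secondary point of care is the commutation of the pullbacks in (iii)$\Rightarrow$(ii) with filtered colimits, but this is standard in a module category.)
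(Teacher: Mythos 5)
The paper does not prove this theorem at all: it is stated as standard background on purity (the proofs being in the cited sources, e.g.\ \cite[\S 2.1]{PreNBK}), so there is no in-paper argument to compare against. Your proof is correct and is essentially the classical one: (ii)$\Rightarrow$(i) by exactness of filtered colimits in ${\bf Ab}$, (i)$\Rightarrow$(iii) via the standard correspondence assigning to a pp formula $\phi$ a finitely presented left module with a distinguished tuple detecting $\phi$ through the tensor product, (iii)$\Rightarrow$(ii) by pulling back $N\rightarrow N/M$ along a filtered system of finitely presented approximations of $N/M$ and using (iii) to split each resulting extension, and (iv)$\Rightarrow$(iii) by the usual induction on the number of free variables. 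The only cosmetic point is that the map $C_\lambda\rightarrow N$ you construct is a lift of $C_\lambda\rightarrow N/M$ through $N\rightarrow N/M$, which by the pullback property yields the desired section $C_\lambda\rightarrow N_\lambda$; as written ``section $C_\lambda\rightarrow N$'' is slightly loose but the intended argument is clear and sound.
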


The terminology is extended by saying that a short exact sequence $0\rightarrow A \xrightarrow{j} B \xrightarrow{p} C \rightarrow 0$ is {\bf pure-exact} if $j$ is a pure monomorphism, in which case we also say that $p$ is a {\bf pure epimorphism}; of these there are again various characterisations including the following.

\begin{theorem} (\cite{Rot1pure})  An morphism $p:B\rightarrow C$ is a pure epimorphism iff for every pp formula $\phi(\overline{x})$ and $\overline{c}$ from $C$ there is $\overline{b}\in \phi(B)$ with $p\overline{b} =\overline{c}$.
\end{theorem}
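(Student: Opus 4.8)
The plan is to reduce both implications to the pp-criterion for purity, Theorem~\ref{charpure}(iii): a monomorphism $A\hookrightarrow N$ is pure iff $\phi(A)=A^n\cap\phi(N)$ for every pp formula $\phi(x_1,\dots,x_n)$. Throughout I would put each pp formula $\phi(\overline x)$ in the normal form $\exists\overline y\,\big((\overline x\,\overline y)H=0\big)$ and split the coefficient matrix as $H=\binom{H_1}{H_2}$ along the rows indexed by $\overline x$ and by $\overline y$, so that $\phi(M)=\{\overline a:\exists\overline e\ \ \overline aH_1+\overline eH_2=0\}$. Observe first that each of the two conditions forces $p$ to be onto --- for the right-hand one, apply it to the pp formula $\exists y\,(x=y)$, whose solution set is everything --- so in either case I may fix the exact sequence $0\to A\to B\xrightarrow{p}C\to0$ with $A=\ker p$ and concentrate on relating purity of $A\hookrightarrow B$ to the lifting property. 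Note also that the inclusion $\phi(A)\subseteq A^n\cap\phi(B)$ is automatic, since homomorphisms preserve pp formulas; only the reverse inclusion is at issue.

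For ``pure epimorphism $\Rightarrow$ lifting'', I would take $\phi$ as above and $\overline c\in\phi(C)$, witnessed by $\overline d$ with $\overline cH_1+\overline dH_2=0$, choose arbitrary lifts $\overline b_0,\overline e_0$ in $B$ of $\overline c,\overline d$ using surjectivity, and set $\overline g:=\overline b_0H_1+\overline e_0H_2$; then $p\overline g=0$, so $\overline g$ lies in $A^k$ ($k$ the number of columns of $H$). The one non-mechanical step is the choice of auxiliary formula: apply the purity criterion to $\chi(\overline z):=\exists\overline x\,\overline y\,\big(\overline xH_1+\overline yH_2=\overline z\big)$. Since $\overline g\in\chi(B)$ (witnessed by $\overline b_0,\overline e_0$) and $\overline g\in A^k$, purity gives $\overline b_1,\overline e_1$ over $A$ with $\overline b_1H_1+\overline e_1H_2=\overline g$. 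Then $\overline b:=\overline b_0-\overline b_1$ satisfies $\overline bH_1+(\overline e_0-\overline e_1)H_2=0$, so $\overline b\in\phi(B)$, and $p\overline b=\overline c$ because $\overline b_1$ is over $\ker p$.

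For the converse, given the lifting property and $\overline a\in A^n\cap\phi(B)$ with witness $\overline e$ over $B$ (so $\overline aH_1+\overline eH_2=0$), I would apply $p$ and use $p\overline a=0$ to get $(p\overline e)H_2=0$ in $C$; that is, $p\overline e$ satisfies the \emph{quantifier-free} pp formula $\theta(\overline y):=(\overline yH_2=0)$. The lifting hypothesis applied to $\theta$ produces $\overline e''$ over $B$ with $\overline e''H_2=0$ and $p\overline e''=p\overline e$; then $\overline e':=\overline e-\overline e''$ lies in $A^m$ (it dies under $p$) and $\overline aH_1+\overline e'H_2=0$, so $\overline a\in\phi(A)$. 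Hence $A\hookrightarrow B$ is pure.

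I do not expect a real obstacle beyond careful bookkeeping with the block decomposition and the normal form for pp conditions. The only steps with genuine content are the two choices of auxiliary formula: the inhomogeneous ``solvability'' formula $\chi$ in the first direction --- this is exactly where one needs the purity criterion for pp formulas carrying extra free variables, not just for one-variable formulas --- and, in the second direction, the observation that it suffices to lift a solution of the \emph{homogeneous} system $\overline yH_2=0$, so that only quantifier-free instances of the lifting hypothesis get used. I would finish by checking the trivial edge cases (empty tuples, the always-true formula) and confirming the passage between the standard normal forms for pp conditions.
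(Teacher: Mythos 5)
Your proof is correct, and since the paper gives no argument for this statement (it simply cites \cite{Rot1pure}), there is nothing to compare it against; the reduction to Theorem~\ref{charpure}(iii) via the auxiliary formulas $\chi(\overline z)=\exists\overline x\,\overline y\,(\overline xH_1+\overline yH_2=\overline z)$ and $\theta(\overline y)=(\overline yH_2=0)$ is exactly the standard argument. Note only that the statement should be read with $\overline c\in\phi(C)$ (otherwise it fails, e.g.\ for $\phi$ being $x=0$), which is how you have correctly interpreted it.
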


There are characterisations of flat and absolutely pure modules in terms of their pp-definable subgroups.  For the former the result is due to Zimmermann (and independently by Rothmaler), see \cite[2.3.9]{PreNBK}.

\begin{theorem}\label{flatchar}\marginpar{flatchar} (\cite[1.3]{Zimm1}, \cite[Prop.~4]{Rotflat})  A module $M$ is flat iff for every pp formula $\phi(x_1,\dots,x_n)$ we have $\phi(M)=M.\phi(R)$ where the latter means $\{ \sum_{i=1}^k \overline{a}_i. \overline{r}_i : \overline{a}_i\in M^n, \overline{r}_i\in \phi(R)\mbox{ some }k\}$ and where $\overline{a}. \overline{r}$ means the $n$-tuple whose $j$th entry is the $j$th entry of $\overline{a}$ times the $j$th entry of $\overline{r}$.  In fact, it is enough to check for pp formulas with one free variable.
\end{theorem}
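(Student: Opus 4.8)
The plan is to establish the two implications separately; the ``one free variable'' clause will fall out of the argument. To begin, observe that $M\cdot\phi(R)\subseteq\phi(M)$ holds for \emph{every} module $M$: for $a\in M$ the assignment $s\mapsto as$ is an $R$-homomorphism $R_R\to M$, pp formulas are preserved by homomorphisms, so $a\cdot\overline r\in\phi(M)$ whenever $\overline r\in\phi(R)$, and since $\phi(M)$ is a subgroup it contains every finite sum $\sum_i a_i\cdot\overline r_i$. Thus the substance of the asserted identity is the reverse inclusion $\phi(M)\subseteq M\cdot\phi(R)$, and that is where flatness must be used.

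For the forward implication, let $M$ be flat and write a given pp formula as $\phi(\overline x)\equiv\exists\overline y\,(\overline x,\overline y)H=0$ with $H$ a matrix over $R$. If $\overline a\in\phi(M)$, choose a witnessing tuple $\overline b$ from $M$, so that $(\overline a,\overline b)H=0$ is a homogeneous system over $M$. I then invoke the matrix form of the equational criterion for flatness (a standard reformulation of the conditions of Theorem~\ref{charflat}; see e.g.\ \cite[\S2.1]{PreNBK}): there are a tuple $\overline c$ from $M$ and a matrix $K$ over $R$ with $(\overline a,\overline b)=\overline c K$ and $KH=0$. Now for each row $K_\ell$ of $K$ the equation $K_\ell H=0$ says exactly that the segment of $K_\ell$ sitting in the $\overline x$-places lies in $\phi(R)$, the remaining entries of $K_\ell$ witnessing the quantifier $\exists\overline y$; and the $\overline x$-part of the equation $(\overline a,\overline b)=\overline c K$ writes $\overline a$ as $\sum_\ell c_\ell\cdot(\text{that segment})$, so $\overline a\in M\cdot\phi(R)$. (One can instead use Theorem~\ref{charflat}(iii): the identity is immediate for $R$, hence for finitely generated projectives since $\phi(-)$ and $-\cdot\phi(R)$ commute with finite direct sums and split monos are pure, Theorem~\ref{charpure}, and then for all flat $M$ since $\phi(-)$ and the finitary operation $-\cdot\phi(R)$ commute with direct limits.)

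For the converse it is, by the forward implication, enough to prove that if $\phi(M)=M\cdot\phi(R)$ for every pp formula $\phi$ in one free variable then $M$ is flat — this simultaneously gives the final clause of the statement. Here I use the elementwise equational criterion: $M$ is flat iff every relation $\sum_{i=1}^k a_i r_i=0$ with $a_i\in M$ and $r_i\in R$ is trivial, i.e.\ $(a_1,\dots,a_k)=\sum_j b_j\overline s_j$ for some $b_j\in M$ and some $\overline s_j$ in the pp-definable left submodule $\psi_{\overline r}(R)=\{\overline s\in R^k:\sum_i s_i r_i=0\}$, where $\psi_{\overline r}(\overline x)$ denotes the pp formula $\sum_i x_i r_i=0$. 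Induct on $k$. For $k=1$ apply the hypothesis to the one-variable formula $x r_1=0$. For $k>1$ apply it to the one-variable formula $\exists x_2\cdots x_k\,(\sum_i x_i r_i=0)$: since $a_1$ satisfies it, $a_1=\sum_j b_j s_{j1}$ with each $s_{j1}$ extending, via its witnesses, to a relation $(s_{j1},\dots,s_{jk})\in\psi_{\overline r}(R)$; setting $a'_i:=\sum_j b_j s_{ji}$ gives $\sum_i a'_i r_i=0$ with $a'_1=a_1$, so subtraction leaves the shorter relation $\sum_{i\ge2}(a_i-a'_i)r_i=0$, trivial by induction; recombining the two parts (padding the shorter relations with a leading zero) exhibits $(a_1,\dots,a_k)$ as trivial, so $M$ is flat.

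The step I expect to be the main obstacle is this last reduction: the equational criterion is naturally phrased in terms of relations among arbitrarily many ring elements, hence many-variable pp formulas, while the theorem asserts that one free variable is enough, and the inductive coordinate-peeling is what bridges that. Subsidiary points needing care — but routine once the setup is fixed — are that the truncated rows of $K$ really do land in $\phi(R)$ with valid witnesses, and that $M\mapsto M\cdot\phi(R)$ genuinely commutes with the direct limit presenting a flat module.
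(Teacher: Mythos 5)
Your argument is correct. Note, though, that the paper itself gives no proof of this statement --- it is quoted as background, with the proof delegated to the cited sources (\cite[1.3]{Zimm1}, \cite[Prop.~4]{Rotflat}, and \cite[2.3.9]{PreNBK}) --- so the comparison can only be with the standard argument found there, and yours is essentially that argument: the easy inclusion $M\cdot\phi(R)\subseteq\phi(M)$ from preservation of pp formulas under the homomorphisms $R\to M$, the forward direction via the equational criterion for flatness (or, as in your parenthetical alternative, via $M$ being a direct limit of projectives, both $\phi(-)$ and $-\cdot\phi(R)$ commuting with direct limits), and the converse by coordinate-peeling induction which is exactly how the one-free-variable refinement is obtained. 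One point worth being conscious of: the matrix form of the equational criterion that you invoke in the forward direction is precisely the quantifier-free case of the theorem itself (for $\phi$ of the form $\overline{x}H=0$), so as written the substantive content of that direction is imported from the cited criterion rather than proved; this is legitimate since that criterion is standard, and your direct-limit alternative avoids the issue entirely, but if you wanted a self-contained proof you should either prove the matrix criterion (by the usual induction on the number of columns of $H$ from the single-relation case) or run the direct-limit argument in full, checking the routine commutation of $-\cdot\phi(R)$ with directed colimits. The converse, using Villamayor's single-relation criterion plus your induction on the length of the relation, is exactly the standard route and correctly delivers the final clause of the statement.
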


\begin{theorem}\label{abspurchar}\marginpar{abspurchar} \cite[1.3]{PRZ1} A module $M$ is absolutely pure iff for every pp formula $\phi$ we have $\phi(M) = {\rm ann}_MD\phi(_RR)$, where $D$ denotes the duality for pp formulas described in Section \ref{secdual} (and ``${\rm ann}_M(-)$" denotes the annihilator in $M$ of $(-)$).
\end{theorem}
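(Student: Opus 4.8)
The plan is to fix a pp formula $\phi(\overline{x})$, written as $\phi(\overline{x})\equiv\exists\overline{y}\,(\overline{x}H_1+\overline{y}H_2=0)$ with $H_1,H_2$ matrices over $R$ (having $n$, resp.\ $m$, rows, where $m$ is the length of $\overline{y}$, and a common number $\ell$ of columns), and to prove the two inclusions in the asserted equality separately. One of them is free: recalling from Section~\ref{secdual} that $D\phi(_RR)$ is the set of columns $H_1\overline{t}$ with $\overline{t}\in R^\ell$ and $H_2\overline{t}=0$, for any module $M$ and $\overline{a}\in\phi(M)$ with witness $\overline{b}$ one computes $\sum_i a_i(H_1\overline{t})_i=(\overline{a}H_1)\overline{t}=-(\overline{b}H_2)\overline{t}=-\overline{b}(H_2\overline{t})=0$, so $\phi(M)\subseteq{\rm ann}_MD\phi(_RR)$ for \emph{every} $M$ (this is also among the basic formal properties of $D$). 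So the real content is the equivalence: $M$ is absolutely pure iff ${\rm ann}_MD\phi(_RR)\subseteq\phi(M)$ for every pp $\phi$.

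For the forward direction I would argue as follows. Given $M$ absolutely pure and $\overline{a}\in{\rm ann}_MD\phi(_RR)$, set $\overline{e}=\overline{a}H_1\in M^\ell$; the hypothesis on $\overline{a}$ says precisely that $\overline{e}\,\overline{t}=0$ whenever $H_2\overline{t}=0$, i.e.\ that the right $R$-module map $f\colon R^\ell\to M$, $\overline{t}\mapsto\overline{e}\,\overline{t}$, kills $\ker(H_2\colon R^\ell\to R^m)$ and hence factors as $R^\ell\twoheadrightarrow{\rm im}(H_2)\xrightarrow{f'}M$. Since ${\rm im}(H_2)\hookrightarrow R^m$ is a monomorphism with cokernel ${\rm coker}(H_2\colon R^\ell\to R^m)$, which is finitely presented by construction, Theorem~\ref{charabs}(iii) lets me extend $f'$ to $\widetilde{f}\colon R^m\to M$; reading off $\overline{b}=(\widetilde{f}(\epsilon_1),\dots,\widetilde{f}(\epsilon_m))\in M^m$ and comparing $f=\widetilde{f}H_2$ on the standard basis of $R^\ell$ gives $\overline{e}=\overline{b}H_2$, i.e.\ $\overline{a}H_1+(-\overline{b})H_2=0$, so $-\overline{b}$ witnesses $\overline{a}\in\phi(M)$.

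The converse should be routine. Assuming ${\rm ann}_MD\phi(_RR)\subseteq\phi(M)$ for all $\phi$, by Theorem~\ref{charabs}(i) it is enough to show an arbitrary monomorphism $j\colon M\to N$ is pure, and by Theorem~\ref{charpure}(iii) for that it suffices to prove $M^n\cap\phi(N)\subseteq\phi(M)$ for all $\phi$ (the reverse holding trivially as $M\leq N$). If $\overline{a}\in M^n\cap\phi(N)$ then the free inclusion above, applied to $N$, gives $\sum_i a_is_i=0$ in $N$ for all $\overline{s}\in D\phi(_RR)$; since $\sum_i a_is_i$ already lies in $M$ it is $0$ in $M$, whence $\overline{a}\in{\rm ann}_MD\phi(_RR)\subseteq\phi(M)$.

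The main obstacle will be the forward direction, and specifically the translation: recognising the annihilator condition ${\rm ann}_MD\phi(_RR)$ as a lifting problem along the monomorphism ${\rm im}(H_2)\hookrightarrow R^m$ — keeping the left/right module structures and the matrix bookkeeping straight — so that the defining factorisation property of absolute purity (Theorem~\ref{charabs}(iii)) applies directly. (An alternative for this direction would be to pass to character modules, using that $M$ absolutely pure forces $M^+$ flat and then invoking Theorem~\ref{flatchar} for $M^+$ together with the compatibility of $D$ with $(-)^+$; I would prefer the direct argument as it avoids that last ingredient.)
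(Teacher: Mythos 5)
Your proposal is correct: the sign-free description of $D\phi({}_RR)$ as $\{H_1\overline{t}:H_2\overline{t}=0\}$ matches the paper's definition of $D$, the recasting of the annihilator condition as a morphism ${\rm im}(H_2)\to M$ which you extend along ${\rm im}(H_2)\hookrightarrow R^m$ (whose cokernel is finitely presented, so \ref{charabs}(iii) applies) correctly yields a witness for $\overline{a}\in\phi(M)$, and the converse via \ref{charabs}(i) and \ref{charpure}(iii), using the universally valid inclusion $\phi(N)\subseteq{\rm ann}_ND\phi({}_RR)$, is sound. Note that the paper itself gives no proof of this statement — it is quoted from \cite[1.3]{PRZ1} — so there is no in-paper argument to compare with; your direct lifting argument is essentially the standard route to this result and needs no repair.
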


\subsection{Extending to preadditive categories; coherence}\label{secflatRR}\marginpar{secflatRR}

An important ingredient of the proofs of the model-theoretic results above is the fact that if $\phi(x)$ is a pp formula then, applied to the right module $R_R$, we obtain a left ideal:  $\phi(R_R)$ is a left ideal because ``pp formulas are preserved by homomorphisms":  if $f:M\rightarrow N$ is a morphism of modules and $\phi$ is pp then $f\phi(M)\leq \phi(N)$; in particular, they are preserved by endomorphisms.  In the case of more general preadditive categories, ${\mathcal R}$, in place of $R$ there is a notion of left ideal but it is not the case that ${\mathcal R}$ is a module over itself (that is, ``${\mathcal R}_{\mathcal R}$" does not make obvious sense).  Nevertheless, we can obtain a left ideal by applying a pp formula for right modules to ${\mathcal R}$, as follows.

We say that a collection of morphisms of the skeletally small preadditive category ${\mathcal R}$ is a {\bf left} (respectively {\bf right}) {\bf ideal} if it is nonempty and is closed under addition and under post- (resp.~pre-) composition with any morphism in ${\mathcal R}$.  (This is a rather more general definition than that of \cite{Mit}, which is being a subfunctor of a representable functor; that definition of a left ideal would imply that all morphisms in it share the same domain.)  We say that a left ideal is {\bf finitely generated} if it contains finitely many morphisms which generate, under addition and left composition the ideal.

If $P$ is an object of ${\mathcal R}$ then by $_{\mathcal R}P$, respectively $P_{\mathcal R}$, we will denote the left, resp.~right, ${\mathcal R}$-module $(P,-)$, resp.~$(-,P)$.  Note that this is, at least can be thought of, as a left (resp.~right) ideal of ${\mathcal R}$, namely as the set, $\bigcup_{Q\in {\mathcal R}} (P,Q)$, of its elements (the union over all sorts).

If $\phi$ is a pp formula for right ${\mathcal R}$-modules, with one free variable, of sort $P$ say, then we will write $\phi({\mathcal R})$ for the left ideal defined thus:  for each object $Q$ of ${\mathcal R}$ we take the set $\phi(Q_{\mathcal R})$ - the group of those elements of $Q_{\mathcal R} =(-,Q)$ of sort $P$ which satisfy the condition $\phi$; this is a subgroup of $((-,P), (-,Q)) = (P,Q)$.  Since pp formulas are preserved by morphisms, this, that is $\bigcup_{Q\in {\mathcal R}} \phi(Q)$, is a left ideal, consisting of a collection of morphisms all with domain $P$; thus it is a sub-ideal of $_{\mathcal R}P = (P,-)$. For instance, if $\phi$ is the formula $x_P=x_P$ then the left ideal it defines is the collection of all morphisms with domain $P$.  As seen at the end of Section \ref{secmodRR}, if $r:P\rightarrow Q$ is an arrow in ${\mathcal R}$ then $\exists y \,(x=yr)$ defines the image of the map $(r,-):(Q,-) \rightarrow (P,-)$ between left ideals - a left ideal contained in $_{\mathcal R}P$.  Dually the formula $yr=0$ defines the subideal of $(Q,-)$ consisting of all those arrows in ${\mathcal R}$ with domain $Q$ which, precomposed with $r$, give $0$.

Note that the left ideal generated by (i.e.~minimal containing) a set of morphisms consists of those which can be written in the form $\sum_1^n t_ir_i$ with the $r_i$ in the generating set; this expression makes sense only if the $r_i$ share the same domain and the $t_i$ share the same codomain.  If, however, ${\mathcal R}$ is additive, so has finite direct sums, then we can make sense of such an expression, without restriction, by forming direct sums over the various domains and codomains and identifying morphisms with corresponding component morphisms between direct sums.

We say that a skeletally small preadditive category ${\mathcal R}$ is {\bf left coherent} if for each object $P$ of ${\mathcal R}$ the intersection of any two finitely generated subideals of the left ideal $(P,-)$ is again finitely generated and, if for any $s:P'\rightarrow P$ the {\bf left annihilator} of $s$, that is the set of morphisms with domain $P$ which, precomposed with $s$, give $0$, also is finitely generated.  Similarly we define right coherence.  The characterisation of coherent rings in terms of their categories of modules generalises to the following result.  An object in an additive category is {\bf coherent} if it is finitely presented and if every finitely generated subobject is finitely presented.

\begin{theorem}\label{cohringoid}\marginpar{cohringoid} (\cite[4.1]{ObRoh}) Let ${\mathcal R}$ be a skeletally small preadditive category.  Then ${\mathcal R}$ is right coherent iff the category ${\rm Mod}\mbox{-}{\mathcal R}$ of right ${\mathcal R}$-modules is locally coherent, in the sense that every finitely presented module is coherent.
\end{theorem}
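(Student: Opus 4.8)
The plan is to characterise directly when every finitely presented right ${\mathcal R}$-module is coherent, which by the definition in the statement is exactly what ``locally coherent'' means. Throughout I would use the standard facts about the functor category ${\rm Mod}\mbox{-}{\mathcal R}$ (see e.g.~\cite{PreNBK}): it is Grothendieck and locally finitely presented, the representables $(-,P)$ form a generating set of finitely generated projectives, every finitely presented module is the cokernel of a morphism between finite direct sums of representables, and one has the usual closure properties — a direct summand of a finitely generated module is finitely generated; finitely generated (resp.~finitely presented) modules are closed under extensions; if $0\to A\to B\to C\to 0$ is exact with $B$ finitely generated and $C$ finitely presented then $A$ is finitely generated; the quotient of a finitely presented module by a finitely generated submodule is finitely presented; if a finitely generated projective surjects onto $M$ then $M$ is finitely presented iff the kernel of that surjection is finitely generated.

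First I would reduce to the representables: using the closure properties above, together with a pullback of a finitely generated submodule along a presentation $\bigoplus_i(-,P_i)\twoheadrightarrow M$, one shows that every finitely presented module is coherent if and only if, for every object $P$, every finitely generated subfunctor of $(-,P)$ — equivalently, via Yoneda, every finitely generated right ideal contained in $(-,P)$ — is finitely presented. The ``only if'' is immediate, since $(-,P)$ is itself finitely presented; the ``if'' is the substantive reduction, done in two steps (first from $(-,P)$ to finite sums $\bigoplus_{i=1}^n(-,P_i)$ by induction on $n$, then to an arbitrary finitely presented $M$ via a presentation of $M$).

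The core computation is then the following. Given morphisms $r_1:P_1\to P,\dots,r_n:P_n\to P$, the right ideal $I=\langle r_1,\dots,r_n\rangle\subseteq(-,P)$ they generate is the image of $\rho=(r_1\circ-,\dots,r_n\circ-):\bigoplus_{i=1}^n(-,P_i)\to(-,P)$, and $I$ is finitely presented iff the syzygy functor ${\rm Syz}(r_1,\dots,r_n):=\ker\rho$ is finitely generated. I would prove that the latter holds for all such data iff ${\mathcal R}$ is right coherent, by induction on $n$. For $n=1$, ${\rm Syz}(r_1)$ is precisely the right annihilator of $r_1$, so this case is the annihilator clause of right coherence. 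For $n\ge 2$, the projection ${\rm Syz}(r_1,\dots,r_n)\to(-,P_n)$, $(t_i)_i\mapsto t_n$, has kernel isomorphic to ${\rm Syz}(r_1,\dots,r_{n-1})$ and image the subfunctor $J=\{\,t:r_nt\in\langle r_1,\dots,r_{n-1}\rangle\,\}$ of $(-,P_n)$, while $t\mapsto r_nt$ places $J$ into a short exact sequence $0\to{\rm ann}_r(r_n)\to J\to\langle r_n\rangle\cap\langle r_1,\dots,r_{n-1}\rangle\to 0$. Since finitely generated modules are closed under extensions, ${\rm Syz}(r_1,\dots,r_n)$ is finitely generated as soon as ${\rm Syz}(r_1,\dots,r_{n-1})$ (induction hypothesis), ${\rm ann}_r(r_n)$, and $\langle r_n\rangle\cap\langle r_1,\dots,r_{n-1}\rangle$ are — and these last two are finitely generated precisely by the annihilator and the intersection clauses in the definition of right coherence. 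For the converse I would read the annihilator clause off the case $n=1$ (the kernel of the surjection $(-,P)\twoheadrightarrow\langle s\rangle$ between finitely presented modules is finitely generated) and the intersection clause off the exact sequence $0\to I\cap I'\to I\oplus I'\to I+I'\to 0$, whose three nonzero terms are all finitely presented. Stringing these together yields: ${\mathcal R}$ right coherent $\iff$ every finitely generated right ideal is finitely presented $\iff$ every finitely presented right ${\mathcal R}$-module is coherent $\iff$ ${\rm Mod}\mbox{-}{\mathcal R}$ is locally coherent.

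The main obstacle I anticipate is bookkeeping rather than anything conceptual: in the many-object setting $P_1\oplus\dots\oplus P_n$ need not exist in ${\mathcal R}$, so one cannot simply quote the classical ring argument via $n\times 1$ matrices over an endomorphism ring; instead the whole argument must be run inside ${\rm Mod}\mbox{-}{\mathcal R}$, using the formal matrices of morphisms that define maps between direct sums of representables and keeping careful track of sorts. Accordingly, the two places that need genuine care are the reduction of the second paragraph — passing from coherence of arbitrary finitely presented modules down to finite generation of syzygy functors of representables — and the identification in the third paragraph of the three subquotients of ${\rm Syz}(r_1,\dots,r_n)$ with the annihilator and the ideal-intersection that appear in the definition of right coherence.
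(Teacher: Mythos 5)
Your argument is correct, but there is nothing in the paper to compare it with: Theorem \ref{cohringoid} is stated with a citation to \cite[4.1]{ObRoh} and no proof is given, so you have supplied a self-contained proof where the paper simply quotes the result. What you write is the classical ring-theoretic argument (right coherent $\iff$ every finitely presented module is coherent) transplanted to the functor category, and the transplant is sound: the reduction to finitely generated subfunctors of the representables $(-,P)$ uses only the standard closure properties of finitely generated and finitely presented objects, all of which hold in the locally finitely presented Grothendieck category ${\rm Mod}\mbox{-}{\mathcal R}$; the identification of ${\rm Syz}(r_1)$ with the right annihilator of $r_1$ and of the two subquotients of ${\rm Syz}(r_1,\dots,r_n)$ with ${\rm ann}_r(r_n)$ and $\langle r_n\rangle\cap\langle r_1,\dots,r_{n-1}\rangle$ is exactly right, and the induction on the number of generators is precisely how the two clauses of the paper's definition of coherence (annihilators of single morphisms, intersections of two finitely generated subideals) get promoted to finite presentation of arbitrary finitely generated subideals. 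One small point worth making explicit: the paper's general notion of finitely generated ideal has the pe-versus-pp subtlety discussed in Section \ref{secflatRR}, but for subideals of a single representable $(-,P)$ all generators share the codomain $P$, so ``finitely generated right ideal contained in $(-,P)$'' and ``finitely generated subfunctor of $(-,P)$'' agree, which is what your Yoneda translation needs. Your anticipated obstacle (no direct sums in ${\mathcal R}$) is indeed only bookkeeping, and your choice to run everything inside ${\rm Mod}\mbox{-}{\mathcal R}$ rather than via matrices over endomorphism rings disposes of it; alternatively one could pass to the additive closure of ${\mathcal R}$, as the paper itself suggests in its discussion of ideals, without changing the module category.
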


In the case that ${\mathcal R}$ is a ring it is clear that a finitely generated left ideal is pp-definable:  if $r_1, \dots, r_n$ are elements of $R$ then the left ideal they generate is the solution set to the pp formula $\exists y_1,\dots, y_n \,(x=\sum_1^n y_ir_i)$.  In the general case, however, the corresponding assertion is that if a left ideal is finitely generated then it is defined by a {\bf positive existential} ({\bf pe}) formula - a disjunction of pp formulas.  For example if $r_1:P_1\rightarrow Q_1$ and $r_2:P_2\rightarrow Q_2$ are arrows in ${\mathcal R}$ and if $P_1 \neq P_2$ then the left ideal ``${\mathcal R}r_1 +{\mathcal R}r_2$" generated by $r_1, r_2$ is defined by $\exists y_1 (x_1=y_1r_1) \vee \exists y_2 (x_2=y_2r_2)$, where the variable $x_i$ has sort $P_i$.  On the other hand, if $P_1=P_2$ then that formula does not define the left ideal, rather, one needs $\exists y_1, y_2 (x=y_1r_1+y_2r_2)$ where $x$ has sort $P_1=P_2$.  Having to distinguish cases like this would be unsatisfactory.  One remedy is to adopt the narrower definition of left ideal as being a subfunctor of a representable functor $(P,-)$.  Another, which is just the natural extension of the use of $n$-tuples of variables, is to assume that ${\mathcal R}$ is {\bf additive} (i.e.~has finite direct sums) - that does not change the category of modules (up to equivalence) - and to close all ideals under both taking components and forming matrices, meaning that if $\{r_{ij}:P_i\rightarrow Q_j \}_{ij}$ is a set of morphisms in a left ideal then the morphism $r=(r_{ij})_{ij}:\bigoplus_i P_i \rightarrow \bigoplus_j Q_j$ also is in the left ideal and, conversely, if such a morphism $r$ is in a left ideal then so are all its components $r_{ij}$.  Both are reasonable solutions, each allowing one to say, correctly, that a finitely generated left ideal is definable by a pp formula for right modules (the second using a pp formula with single free variable of the product sort).  We will use the first since it is simpler, but note that what we do also covers the more general (and perhaps more useful) notion of ideal allowed in the second formulation.

Having, we hope, said what is needed to clarify these issues, the generalisations of results, such as the next, from the ${\mathcal R} =R$ case becomes rather automatic.

\begin{theorem}\label{cohfgpp}\marginpar{cohfgpp} The skeletally small preadditive category ${\mathcal R}$ is right coherent iff for every pp formula $\phi(x)$ for right ${\mathcal R}$-modules, with $x$ of sort $P\in {\mathcal R}$, the left ideal $\phi({\mathcal R}_{\mathcal R})$ is a finitely generated left ideal (contained in $(P,-)$).

If ${\mathcal R}$ is additive then equivalent is:  for every pp formula $\phi(x_1,\dots,x_n)$ for right ${\mathcal R}$-modules, with $x_i$ of sort $P_i$, the left ideal $\phi({\mathcal R}_{\mathcal R})$ is a finitely generated left ideal (contained in $(P_1\oplus \dots \oplus P_n,-)$).
\end{theorem}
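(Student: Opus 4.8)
The plan is to transfer the known ring case (${\mathcal R}=R$; see \cite{PreNBK} and the references there) to the ringoid ${\mathcal R}$, using the correspondence between pp formulas for right ${\mathcal R}$-modules and finitely presented right ${\mathcal R}$-modules set up in the preceding paragraphs together with the local-coherence criterion of Theorem~\ref{cohringoid}. I would treat the two implications separately, as they are of rather different character.

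For the implication ``${\mathcal R}$ coherent $\Rightarrow$ every $\phi({\mathcal R}_{\mathcal R})$ finitely generated'', the one preliminary to record is the existence of free realisations over ${\mathcal R}$: given a pp formula $\phi(x)$ for right ${\mathcal R}$-modules with $x$ of sort $P$, put $\phi$ into a normal form $\exists\overline{y}\,(x,\overline{y})H=0$ as at the end of Section~\ref{secmodRR} (so the ``entries'' of $H$ are arrows of ${\mathcal R}$ with matching sorts), let $C$ be the finitely presented right ${\mathcal R}$-module presented by the relations encoded in $H$, and let $c\in CP$ be the element coming from the free variable; one checks in the usual way that $\phi(M)=\{gc:g\in\mathrm{Hom}(C,M)\}$ for every right ${\mathcal R}$-module $M$. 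Reading $c$ by Yoneda as a morphism $(-,P)\to C$ and passing to the left ${\mathcal R}$-module $C^{\ast}\colon Q\mapsto\mathrm{Hom}(C,(-,Q))$, one finds that $\phi({\mathcal R}_{\mathcal R})=\bigcup_Q\phi(Q_{\mathcal R})$ is precisely the image of the induced map of left ${\mathcal R}$-modules $c^{\ast}\colon C^{\ast}\to{}_{\mathcal R}P=(P,-)$. Dualising a finite presentation of $C$ exhibits $C^{\ast}$ as the kernel of a map between finite direct sums of representable left ${\mathcal R}$-modules, hence as a subobject of a finitely presented left ${\mathcal R}$-module; since the relevant ${\mathcal R}$-module category is locally coherent (Theorem~\ref{cohringoid}), this kernel is finitely generated, and therefore so is its image $\phi({\mathcal R}_{\mathcal R})$ inside $(P,-)$.

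For the converse I would realise the two clauses in the definition of coherence as instances of the hypothesis. A finitely generated subideal $I$ of $(P,-)$ generated by arrows $r_1:P\to Q_1,\dots,r_k:P\to Q_k$ equals $\phi_I({\mathcal R}_{\mathcal R})$ for the pp formula $\phi_I(x):=\exists y_1\cdots y_k\,(x=\sum_i y_i r_i)$ (with $y_i$ of sort $Q_i$), as one reads off from the description of the generated left ideal given above; since $\phi_I\wedge\phi_J$ is again a pp formula and $(\phi_I\wedge\phi_J)({\mathcal R}_{\mathcal R})=I\cap J$, the hypothesis forces the intersection of any two finitely generated subideals of $(P,-)$ to be finitely generated. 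Likewise, for $s:P'\to P$ the left annihilator of $s$ is $\psi_s({\mathcal R}_{\mathcal R})$ for the quantifier-free pp formula $\psi_s(x):=(xs=0)$, so it too is finitely generated. These are exactly the two requirements for coherence. The equivalence with the $n$-variable formulation, in the additive case, is just the by-now-familiar passage between a tuple of variables of sorts $P_1,\dots,P_n$ and a single variable of sort $P_1\oplus\cdots\oplus P_n$ already set up before the statement.

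Everything here is essentially bookkeeping; the one point where the ${\mathcal R}=R$ argument does not literally copy over — and hence the place I would be most careful — is the handling of sorts: both the normal form for pp formulas over a ringoid and the construction of free realisations (the sorts of the quantified variables and of the individual defining equations), and, on the converse side, the precise matching of the formula operations $\wedge$, ``$\exists\,x=\sum_i y_i r_i$'' and ``$xs=0$'' with the ideal operations of intersection, generation by a finite set of arrows, and left annihilation in the multi-sorted language. Once that dictionary is made explicit — and the discussion preceding the theorem already supplies most of it — the proof is, as advertised, automatic.
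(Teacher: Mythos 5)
Your argument is essentially the paper's own proof in different clothing: your free realisation $C$ and its dual $C^{\ast}$ are exactly the kernel of the map $(P_1\oplus\dots\oplus P_k,-)\to(P'_1\oplus\dots\oplus P'_m,-)$ induced by the matrix $H$ (the paper writes it as the intersection of the annihilators of the columns of $H$), with finite generation supplied by coherence via Theorem~\ref{cohringoid} and then passed to the image/projection giving $\phi({\mathcal R}_{\mathcal R})$, just as in the paper. Your converse --- realising left annihilators and intersections of finitely generated subideals of $(P,-)$ as $\psi_s({\mathcal R}_{\mathcal R})$ and $(\phi_I\wedge\phi_J)({\mathcal R}_{\mathcal R})$ --- and your treatment of the $n$-variable additive clause likewise coincide with the paper's argument, so the proposal is correct and takes essentially the same route.
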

\begin{proof} We use the proof given at \cite[2.3.19]{PreNBK} for the ring version (\cite[1.3]{Zimm1}, \cite[Prop.~7]{Rotflat}).

Assume that ${\mathcal R}$ is right coherent and also additive and suppose that $\phi(x_1,\dots,x_n)$ is $\exists x_{n+1},\dots, x_k ( \overline{x}H=0)$ where $x_i$ has sort $P_i$ and where $H$ is a $k\times m$ matrix with $ij$-entry $r_{ij}:P'_j\rightarrow P_i$, equivalently $(r_{ij},-):(P_i,-)\rightarrow (P'_j,-)$.  The solution set to this system of equations is the kernel of the morphism from $(P_1\oplus \dots \oplus P_k,-)$ to $(P'_1\oplus \dots P'_m,-)$ defined by $\overline{z} \mapsto \overline{z}H$.  If we write the columns of $H$ as $\overline{r}_1, \dots, \overline{r}_m$ then this kernel is $\bigcap_{j=1}^m {\rm ann}(\overline{r}_j)$ where ${\rm ann}(\overline{r}_j)$ is the left sub-ideal of $(P_1\oplus \dots \oplus P_k,-)$ whose value at $Q\in {\mathcal R}$ is the group of morphisms from $P_1\oplus \dots \oplus P_k$ to $Q$ which, precomposed with $H$, give $0$.  Since ${\mathcal R}$ is right coherent that left ideal is finitely generated (the algebraic proof of that, in terms of modules, given in \cite{PreNBK} works just as well here, in view of \ref{cohringoid}).  Therefore the projection to the first $n$ coordinates, that is $\phi({\mathcal R}_{\mathcal R})$, is finitely generated.

If ${\mathcal R}$ is additive then the one-variable condition is a special case of this and, in the preadditive case can be proved just as above or derived from the additive case.

Since annihilators of elements (as seen already) and intersections of finitely generated left ideals (as easily seen) are pp-definable, coherence of ${\mathcal R}$ follows from the other conditions.
\end{proof}

We were able to follow the proof at \cite[2.3.19]{PreNBK} step-for-step, indeed almost word-for-word and so we will not give proofs here for extensions of results such as \ref{flatchar} and \ref{abspurchar} to ${\mathcal R}$-modules.

The essence of this section is the assertion that what works (in this circle of ideas) for modules over rings works also for modules over preadditive categories.  Some statements for the ring context might not be literally meaningful in the more general context but the details above indicate how to modify such statements so as to produce ones which are both meaningful and correct, as well as how to find their proofs.

\section{The category of pp sorts}\label{secppsort}\marginpar{secppsort}

Small abelian categories arise from the model theory as follows.

Because homomorphisms preserve solution sets of pp formulas, for any pp formula $\phi$ the assignment $M\mapsto \phi(M)$ defines a functor, which we denote $ F_\phi $, from ${\rm Mod}\mbox{-}{\mathcal R}$ to $ {\bf Ab}$.  Because (taking the solution set of) a pp formula commutes with direct limits and since every module is a direct limit of finitely presented modules, this functor is determined by its restriction to the subcategory, $ {\rm mod}\mbox{-}{\mathcal R}$, of finitely presented modules. We will use the same notation, $F_\phi$, for this restriction.   If $ \psi \leq \phi $ then $ F_\psi$ is a subfunctor of $F_\phi  $ and we can form the quotient functor $ F_{\phi /\psi }=F_\phi /F_\psi $, which takes a module $M$ to $\phi(M)/\psi(M)$.

Define the {\bf category of pp sorts} for (the theory of) $ {\mathcal R}$-modules to have, for objects, the pp-pairs (which we typically write in the format $ \phi /\psi $) and, for morphisms, the pp-definable maps between such pairs; that is, if $ \phi (\overline{x})/\psi (\overline{x}) $ and $ \phi '(\overline{y})/\psi '(\overline{y}) $ are pp-pairs, then a pp formula $ \rho (\overline{x},\overline{y}) $ defines a map from the first pair to the second if $ \rho (\overline{x},\overline{y})$ and $\phi (\overline{x})$, respectively $ \rho (\overline{x},\overline{y})$ and $ \psi (\overline{x})$, together imply $ \phi '(\overline{y})$, respectively $ \psi '(\overline{y}) $ (clearly this is what is required to have a well-defined map between the corresponding quotients). This category is denoted $ {\mathbb L}^{\rm eq+}_{{\mathcal R}}$. It is a small abelian category and, {\it via} its action evaluation-at-a-finitely-presented-module, it embeds as a subcategory of the category $({\rm mod}\mbox{-}{\mathcal R},{\bf Ab})$ of additive functors from finitely presented ${\mathcal R}$-modules to ${\bf Ab}$.  In fact (\cite[3.2.5]{BurThes}; see, e.g., \cite[10.2.30]{PreNBK}) this embedding is an equivalence with $({\rm mod}\mbox{-}{\mathcal R},{\bf Ab})^{\rm fp}$, the category of finitely presented functors and hence also with the free abelian category on ${\mathcal R}^{\rm op}$ (see e.g.~\cite[\S 10.2.7]{PreNBK}).

\begin{theorem}\label{equiv3}\marginpar{equiv3} (see, e.g., \cite{PreADC}) For any skeletally small preadditive category ${\mathcal R}$ the categories $ ({\rm mod}\mbox{-}{\mathcal R},{\bf Ab})^{\rm fp}$, ${\mathbb L}^{\rm eq+}_{{\mathcal R}}$, ${\rm Ab}({\mathcal R})^{\rm op}$ and ${\rm fun}\mbox{-}{\mathcal R} = ({\rm Mod}\mbox{-}{\mathcal R}, {\bf Ab})^{\rightarrow \prod}$, the category of additive functors which commute with direct products and direct limits, are equivalent.
\end{theorem}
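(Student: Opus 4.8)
The plan is to deduce the whole equivalence from two facts already in hand plus one further identification. The two facts in hand are, first, the equivalence $ {\mathbb L}^{\rm eq+}_{{\mathcal R}} \simeq ({\rm mod}\mbox{-}{\mathcal R},{\bf Ab})^{\rm fp} $ given by evaluation at finitely presented modules (\cite{BurThes}; recalled in Section \ref{secppsort}) and, second, the identification of $ ({\rm mod}\mbox{-}{\mathcal R},{\bf Ab})^{\rm fp} $ with the free abelian category on $ {\mathcal R}^{\rm op} $, i.e.\ with $ {\rm Ab}({\mathcal R})^{\rm op} $ (also recalled there). So the task is to prove $ {\rm fun}\mbox{-}{\mathcal R} \simeq ({\rm mod}\mbox{-}{\mathcal R},{\bf Ab})^{\rm fp} $, and I would set this up through the restriction functor $ \iota^* \colon ({\rm Mod}\mbox{-}{\mathcal R},{\bf Ab}) \to ({\rm mod}\mbox{-}{\mathcal R},{\bf Ab}) $ along the inclusion $ \iota\colon {\rm mod}\mbox{-}{\mathcal R} \hookrightarrow {\rm Mod}\mbox{-}{\mathcal R} $, together with its left adjoint $ L $ (left Kan extension, computed pointwise by a directed colimit, hence exact).

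The formal part runs as follows. Since $ \iota $ is fully faithful we have $ \iota^* L \cong {\rm id} $, so $ L $ is fully faithful; and the image of $ L $ consists of functors commuting with direct limits (universal property of $ {\rm Mod}\mbox{-}{\mathcal R} $ as the directed-colimit completion of $ {\rm mod}\mbox{-}{\mathcal R} $), on which subcategory the counit $ L\iota^* \to {\rm id} $ is invertible (it is so on finitely presented modules, and both sides commute with direct limits). A routine check — using that in $ {\bf Ab} $ products are exact and finite limits commute with directed colimits — shows $ {\rm fun}\mbox{-}{\mathcal R} $ is closed under kernels and cokernels formed in $ ({\rm Mod}\mbox{-}{\mathcal R},{\bf Ab}) $, hence is abelian; in particular it lies inside that ``commutes with direct limits'' subcategory, so $ \iota^*\colon {\rm fun}\mbox{-}{\mathcal R} \to ({\rm mod}\mbox{-}{\mathcal R},{\bf Ab}) $ is fully faithful with left inverse $ L $. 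For the easy half of essential surjectivity, a finitely presented functor $ F $ is a cokernel of a map $ (N,-)\to (M,-) $ of representables with $ M,N $ finitely presented modules; the corresponding representables on $ {\rm Mod}\mbox{-}{\mathcal R} $ commute with direct products (universal property) and with direct limits ($ M,N $ finitely presented), so $ LF = {\rm coker}\big((N,-)\to(M,-)\big) $, computed in $ ({\rm Mod}\mbox{-}{\mathcal R},{\bf Ab}) $, lies in $ {\rm fun}\mbox{-}{\mathcal R} $. (Equivalently: each pp-pair functor $ F_{\phi/\psi} $ visibly commutes with $ \prod $ and $ \varinjlim $.) Thus $ L $ and $ \iota^* $ restrict to mutually inverse equivalences $ ({\rm mod}\mbox{-}{\mathcal R},{\bf Ab})^{\rm fp} \rightleftarrows {\rm fun}\mbox{-}{\mathcal R} $ once we know the remaining point: that $ \iota^* $ carries $ {\rm fun}\mbox{-}{\mathcal R} $ into $ ({\rm mod}\mbox{-}{\mathcal R},{\bf Ab})^{\rm fp} $.

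That remaining point is the only nonformal step, and I expect it to be the main obstacle; I would prove it in two stages. \emph{Finite generation:} for $ G\in{\rm fun}\mbox{-}{\mathcal R} $ write $ G_0=\iota^*G $ as the directed union of its finitely generated subfunctors $ H $; applying $ L $ (exact, colimit-preserving) gives $ G=\bigcup_H LH $ inside $ ({\rm Mod}\mbox{-}{\mathcal R},{\bf Ab}) $. If $ G_0 $ were not finitely generated then each such $ H $ is proper, so $ L(G_0/H)=G/LH\neq 0 $ ($ L $ faithful), giving a module $ N_H $ and an element $ b_H\in G(N_H)\setminus (LH)(N_H) $; because $ G $ commutes with products, $ (b_H)_H $ is an element $ b $ of $ G\big(\prod_H N_H\big)=\prod_H G(N_H) $, and $ b $ cannot lie in $ (LH')\big(\prod_H N_H\big) $ for any finitely generated $ H'\subseteq G_0 $, since pushing $ b $ forward along the projection to $ N_{H'} $ and using naturality of $ LH'\hookrightarrow G $ would force $ b_{H'}\in (LH')(N_{H'}) $ — contradicting $ G=\bigcup_{H'}LH' $. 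Hence $ G_0 $ is finitely generated; this product trick is exactly where the hypothesis ``commutes with $ \prod $'' does its work. \emph{Finite presentation:} pick an epimorphism $ (A,-)\to G_0 $ with $ A $ finitely presented, with kernel $ K $; applying $ L $ gives an exact sequence $ 0\to LK\to (A,-)\to G\to 0 $ in $ ({\rm Mod}\mbox{-}{\mathcal R},{\bf Ab}) $, and since $ {\rm fun}\mbox{-}{\mathcal R} $ contains $ (A,-) $ and $ G $ and is closed under kernels, $ LK\in{\rm fun}\mbox{-}{\mathcal R} $; then $ K=\iota^*LK $ is finitely generated by the case just done, so $ G_0 $ is finitely presented.

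Putting these together gives $ {\rm fun}\mbox{-}{\mathcal R}\simeq ({\rm mod}\mbox{-}{\mathcal R},{\bf Ab})^{\rm fp}\simeq{\mathbb L}^{\rm eq+}_{{\mathcal R}}\simeq{\rm Ab}({\mathcal R})^{\rm op} $, which is the assertion; along the way one recovers that all four categories are abelian and (being equivalent to $ ({\rm mod}\mbox{-}{\mathcal R},{\bf Ab})^{\rm fp} $) skeletally small. The same circle of ideas is carried out, for the ring case, in \cite{PreNBK}, and in general in \cite{PreADC}. Note finally that $ {\rm fun}\mbox{-}{\mathcal R} $ is \emph{not} closed under directed colimits in $ ({\rm Mod}\mbox{-}{\mathcal R},{\bf Ab}) $, precisely because directed colimits need not commute with infinite products — which is consistent with its being skeletally small and is the obstruction the product trick overcomes.
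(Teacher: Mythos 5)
The paper itself offers no proof of this theorem — it is stated with a pointer to \cite{PreADC}, the first three equivalences having been recalled (with references to \cite{BurThes}, \cite[10.2.30, \S 10.2.7]{PreNBK}) in the paragraph preceding the statement. Your argument is correct and is essentially the standard proof in those cited sources (the ring case is \cite[\S 10.2.5]{PreNBK}): the restriction/Kan-extension adjunction gives the formal part, and your product trick forcing finite generation of $\iota^*G$, followed by the exact-kernel argument for finite presentation, is precisely the key step there. The one thing you use without comment is the identification of the free abelian category on ${\mathcal R}^{\rm op}$ with ${\rm Ab}({\mathcal R})^{\rm op}$, i.e.\ the duality ${\rm Ab}({\mathcal R}^{\rm op})\simeq {\rm Ab}({\mathcal R})^{\rm op}$; this is standard (elementary/Auslander--Gruson--Jensen duality, cf.\ Section \ref{secdual}) and is in line with what the paper treats as recalled background, so it is a presentational gap at most.
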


We have, by localisation/relativisation, the same - equivalence between the functor category $({\mathcal D}, {\bf Ab})^{\rightarrow \prod}$, the category $ {\mathbb L}^{\rm eq+}({\mathcal D}) $ of pp sorts and a localisation of a free abelian category - for any definable category $ {\mathcal D}$.  Here, by $ {\mathbb L}^{\rm eq+}({\mathcal D}) $ we denote the category of pp-pairs for $ {\mathcal D} $ - defined just as above but with $ {\mathcal D} $ replacing $ {\rm Mod}\mbox{-}{\mathcal R}$ (and using pp formulas from the model-theoretic language for ${\mathcal R}$-modules where ${\mathcal D}$ is a definable subcategory of ${\rm Mod}\mbox{-}{\mathcal R}$).  To be more precise, the objects of $ {\mathbb L}^{\rm eq+}({\mathcal D}) $ are the pairs of those pp formulas $ \phi /\psi  $ with $ \psi \rightarrow \phi $ in ${\mathcal D}$ (though it is also possible just to use the same objects as ${\mathbb L}^{\rm eq+}_{{\mathcal R}}$ since localisation will introduce new isomorphisms). The morphisms from sort $ \phi /\psi  $ to $ \phi '/\psi ' $ are the equivalence classes, modulo the theory of $ {\mathcal D}$, of pp formulas which define a relation from the group defined by $ \phi /\psi  $ to that defined by $ \phi '/\psi ' $ which is functional in every $ D\in {\mathcal D}$.  If we denote by ${\mathcal L}^{\rm eq+}({\mathcal D})$ the language based on this collection of sorts and arrows between them, then each object $ D\in {\mathcal D} $ can be regarded as/expanded to an $ {\mathcal L}^{\rm eq+}({\mathcal D})$-structure in the obvious way. A fruitful way to regard this expansion is to regard it as the additive functor evaluation-at-$D$, $ {\rm ev}_D$, from $ {\mathbb L}^{\rm eq+}({\mathcal D}) $ to $ {\bf Ab} $ (which takes a sort $ \phi /\psi  $ to the abelian group $ \phi (D)/\psi (D) $ and takes a function symbol to the actual function that it defines from $ \phi (D)/\psi (D) $ to $ \phi '(D)/\psi '(D)$).
Then we may say part of \cite[12.3.20]{PreNBK} in model-theoretic terms as follows (Serre subcategories and localisation/quotient categories are discussed in the same reference, more briefly in \cite{PreADC} and in many other sources).

\begin{theorem}\label{ppsortquotpp}\marginpar{ppsortquotpp} If $ {\mathcal D} $ is a definable subcategory of $ {\rm Mod}\mbox{-}{\mathcal R} $ and $ {\mathcal S}_{{\mathcal D}} $ denotes the full subcategory of $ {\mathbb L}^{\rm eq+}_{\mathcal R}$ on those pp-pairs which are closed on ${\mathcal D}$ then the quotient category $  {\mathbb L}^{\rm eq+}_{\mathcal R}/{\mathcal S}_{{\mathcal D}} $ is, in a natural way, naturally equivalent to $ {\mathbb L}^{\rm eq+}({\mathcal D})$.
\end{theorem}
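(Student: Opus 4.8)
The plan is to regard the statement as the model-theoretic face of the Serre-localisation already behind Theorem \ref{equiv3} and the remarks following it, and to spell out the dictionary; concretely there are three steps. \textbf{Step 1: ${\mathcal S}_{\mathcal D}$ is a Serre subcategory of ${\mathbb L}^{\rm eq+}_{\mathcal R}$.} The key point is that for \emph{every} module $M$ (not merely finitely presented ones) evaluation is an exact functor ${\rm ev}_M:{\mathbb L}^{\rm eq+}_{\mathcal R}\to{\bf Ab}$. Indeed, the kernel, image and cokernel in ${\mathbb L}^{\rm eq+}_{\mathcal R}$ of the morphism represented by a pp formula $\rho$ are given by explicit pp formulas manufactured from $\rho$ and the formulas bounding the source and target pairs (for instance the kernel of $[\rho]:\phi/\psi\to\phi'/\psi'$ is $\big(\phi(\overline{x})\wedge\exists\overline{y}(\rho\wedge\psi'(\overline{y}))\big)/\psi$), and inspection shows that the solution set of each such formula in an arbitrary $M$ is exactly the corresponding abelian-group kernel, image or cokernel of $F_\rho(M)$; equivalently, a pp-pair is the zero object precisely when it is closed on every module. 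Since $\phi/\psi$ lies in ${\mathcal S}_{\mathcal D}$ iff it lies in $\ker({\rm ev}_D)$ for all $D\in{\mathcal D}$, and the class of objects annihilated by an exact functor is a Serre subcategory, ${\mathcal S}_{\mathcal D}=\bigcap_{D\in{\mathcal D}}\ker({\rm ev}_D)$ is Serre; so the quotient ${\mathbb L}^{\rm eq+}_{\mathcal R}/{\mathcal S}_{\mathcal D}$ is abelian, with an exact quotient functor $Q$.

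\textbf{Step 2: the comparison functor.} Let $G:{\mathbb L}^{\rm eq+}_{\mathcal R}\to{\mathbb L}^{\rm eq+}({\mathcal D})$ send a pp-pair to itself and (the class of) a pp-definable map to itself, now read modulo the theory of ${\mathcal D}$. This is well defined since a relation functional in every ${\mathcal R}$-module is a fortiori functional in every $D\in{\mathcal D}$ and since $\psi\le\phi$ implies $\psi\rightarrow\phi$ in ${\mathcal D}$; and $G$ is exact because, by the same formula-level description as above, the abelian structure of ${\mathbb L}^{\rm eq+}({\mathcal D})$ is carried by the same pp formulas, now interpreted only in ${\mathcal D}$. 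By the definition of ${\mathcal S}_{\mathcal D}$, $G$ kills ${\mathcal S}_{\mathcal D}$, so the universal property of the Serre quotient yields an exact functor $\overline{G}:{\mathbb L}^{\rm eq+}_{\mathcal R}/{\mathcal S}_{\mathcal D}\to{\mathbb L}^{\rm eq+}({\mathcal D})$ with $\overline{G}Q=G$. Essential surjectivity of $\overline{G}$ is the remark already made in the text: a pair $\phi/\psi$ with only $\psi\rightarrow\phi$ in ${\mathcal D}$ is isomorphic in ${\mathbb L}^{\rm eq+}({\mathcal D})$, via the diagonal relation $\overline{x}=\overline{y}$, to $(\phi+\psi)/\psi$, which is $G$ of a genuine pp-pair.

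\textbf{Step 3: full faithfulness of $\overline{G}$} — the substance. I would use the filtered-colimit description of Hom-sets in a Serre quotient,
\[
{\rm Hom}_{{\mathbb L}^{\rm eq+}_{\mathcal R}/{\mathcal S}_{\mathcal D}}(QA,QB)\;=\;\varinjlim\;{\rm Hom}_{{\mathbb L}^{\rm eq+}_{\mathcal R}}(A',B/B'),
\]
the colimit over subobjects $A'\hookrightarrow A$ with $A/A'\in{\mathcal S}_{\mathcal D}$ and subobjects $B'\hookrightarrow B$ with $B'\in{\mathcal S}_{\mathcal D}$. For $A=\phi/\psi$, $B=\phi'/\psi'$ these data are: a pp formula $\phi_0$ with $\psi\le\phi_0\le\phi$ and $\phi_0(D)=\phi(D)$ for $D\in{\mathcal D}$; a pp formula $\phi_1$ with $\psi'\le\phi_1\le\phi'$ and $\phi_1(D)=\psi'(D)$ for $D\in{\mathcal D}$; and a pp-definable map $\phi_0/\psi\to\phi'/\phi_1$. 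Given a morphism $GA\to GB$ represented by a pp formula $\rho(\overline{x},\overline{y})$ that is functional from $\phi/\psi$ to $\phi'/\psi'$ in every $D\in{\mathcal D}$, I would shrink the domain by taking $\phi_0(\overline{x})$ to be the pp-definable subgroup $\psi(\overline{x})+\big(\phi(\overline{x})\wedge\exists\overline{y}(\rho\wedge\phi'(\overline{y}))\big)$ — contained in $\phi$, and equal to $\phi$ on ${\mathcal D}$ by totality of $\rho$ there — and enlarge the zero of the codomain by letting $\phi_1\le\phi'$ be the pp-definable subgroup of $\phi'$ generated by $\psi'$, by the $\rho$-image of $\psi$, and by the ``ambiguity subgroup'' $\big\{\overline{b}_1-\overline{b}_2:\exists\overline{a}\,(\phi(\overline{a})\wedge\rho(\overline{a},\overline{b}_1)\wedge\rho(\overline{a},\overline{b}_2))\big\}$: each summand is pp-definable and contained in $\phi'$, and each is closed on ${\mathcal D}$ precisely because $\rho$ is well defined and functional there, so $\phi_1(D)=\psi'(D)$. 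Replacing $\rho$ by the pp formula $\exists\overline{z}\big(\psi(\overline{z})\wedge\rho(\overline{x}-\overline{z},\overline{y})\wedge\phi'(\overline{y})\big)$ then produces an honest pp-definable map $\phi_0/\psi\to\phi'/\phi_1$ agreeing with $\rho$ over ${\mathcal D}$, so $\overline{G}$ hits the given morphism; fullness follows. Faithfulness is the same idea in reverse: if a representative $A'\to B/B'$ maps to $0$ over ${\mathcal D}$, the pp-pair that measures its non-vanishing is closed on ${\mathcal D}$, hence can be absorbed by passing to a smaller $A''\hookrightarrow A'$ and a larger $B''\hookrightarrow B$, along which the representative already vanishes.

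The step I expect to be the main obstacle is Step 3, and within it the need to be sure that \emph{every} obstruction to $\rho$ being an honest morphism of pp-pairs over all of ${\rm Mod}\mbox{-}{\mathcal R}$ — its failing to be total, to land in $\phi'$, to carry $\psi$ into $\psi'$, and to be functional — is itself witnessed by a pp-pair closed on ${\mathcal D}$, and so can be swept into the source-subobject/target-quotient data of the Serre quotient; tracking the formulas $\phi_0$, $\phi_1$ and the corrected $\rho$, and checking compatibility with composition and independence of the choices, is routine but fiddly. A shortcut that avoids most of it is to invoke the functor-category localisation directly: under the equivalences of Theorem \ref{equiv3}, ${\mathcal S}_{\mathcal D}$ corresponds to the Serre subcategory of $({\rm mod}\mbox{-}{\mathcal R},{\bf Ab})^{\rm fp}$ whose quotient is the functor category $({\mathcal D},{\bf Ab})^{\rightarrow\prod}$ of ${\mathcal D}$, while ${\mathbb L}^{\rm eq+}({\mathcal D})$ is identified with that quotient by the localisation statement following Theorem \ref{equiv3} (i.e.\ \cite[12.3.20]{PreNBK}); matching the two presentations then gives the equivalence with no further computation.
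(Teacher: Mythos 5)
Your argument is correct, but it takes a genuinely different route from the paper: the paper offers no direct proof at all --- Theorem \ref{ppsortquotpp} is presented there simply as a model-theoretic restatement of the localisation result \cite[12.3.20]{PreNBK}, which is exactly the ``shortcut'' of your final paragraph (identify $ {\mathbb L}^{\rm eq+}_{\mathcal R} $ with $ ({\rm mod}\mbox{-}{\mathcal R},{\bf Ab})^{\rm fp} $ via Theorem \ref{equiv3}, match $ {\mathcal S}_{\mathcal D} $ with the Serre subcategory corresponding to $ {\mathcal D} $ under the bijection between Serre subcategories and definable subcategories, and identify the quotient with $ {\mathbb L}^{\rm eq+}({\mathcal D})\simeq ({\mathcal D},{\bf Ab})^{\rightarrow \prod} $). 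Your Steps 1--3, by contrast, give a self-contained formula-level proof: exactness of evaluation at every module makes $ {\mathcal S}_{\mathcal D}=\bigcap_{D\in {\mathcal D}}\ker({\rm ev}_D) $ Serre; the comparison functor kills $ {\mathcal S}_{\mathcal D} $ and is essentially surjective; and your explicit $ \phi_0 $, $ \phi_1 $ and corrected $ \rho $ realise any $ {\mathcal D} $-definable function as an honest morphism $ A'\rightarrow B/B' $ in the colimit description of Hom-sets in the Serre quotient, with faithfulness amounting to $ {\rm im}(f)\in {\mathcal S}_{\mathcal D}\Rightarrow Qf=0 $. What your direct route buys is independence from the functor-category machinery and an explicit pp-formula dictionary (handy in computations such as Example \ref{kepsilon}); what the paper's route buys is brevity and the naturality of the equivalence with respect to the actions on $ {\mathcal D} $ coming for free from the established equivalences. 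Two small remarks on your Step 3: the ``ambiguity subgroup'' is redundant, since the solution set of $ \rho $ is a subgroup, so a functionality discrepancy $ \overline{b}_1-\overline{b}_2 $ already lies in the $ \rho $-image of $ \psi $ (and you should conjoin $ \phi'(\overline{b}_i) $ in its definition to stay inside $ \phi' $ in an arbitrary module); and the checks you defer --- that your lift maps to the given morphism, and compatibility with composition --- are indeed routine, using that the inclusions $ \phi_0/\psi\rightarrow \phi/\psi $ and $ \phi'/\psi'\rightarrow \phi'/\phi_1 $ become isomorphisms over $ {\mathcal D} $.
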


The naturality of the equivalence is with respect to the actions on $ {\mathcal D}$: if $ \phi/\psi $ is a pp-pair for ${\mathcal R}$-modules then the action of its image in the quotient category is given by the same pp-pair but now restricted to ${\mathcal D}$.

Every small abelian category arises thus, as the category of pp-pairs for some definable category.

\subsection{An example}\label{secexppsort}\marginpar{secexppsort}

Much about the category ${\mathbb L}^{\rm eq+}({\mathcal D})$ is explicitly computable (if ${\mathcal D}$ is given explicitly enough).  In some cases the whole category of pp sorts can be described completely.

\begin{example}\label{kepsilon}\marginpar{kepsilon} \cite[\S 6.8]{Perera} Let $ k $ be a field and let $ R=k[\epsilon ] =k[X]/\langle X^2\rangle $. It is easy to see that every $R$-module has the form $ R^{(\kappa )}\oplus S_1^{(\lambda )} $ for some cardinals $ \kappa ,\lambda  $ where $S_1$ is the unique simple module $R/\langle \epsilon \rangle$.  In particular $ R $ is of {\bf finite representation type}, meaning that every $ R$-module is a direct sum of indecomposable modules and there are, up to isomorphism, only finitely many indecomposable modules. The Auslander-Reiten quiver of $ R $ is easily computed (in this example we assume some knowledge of Auslander-Reiten theory, for which see e.g.~\cite{ARS}) and has arrows the embedding $ i:S_1\rightarrow R $ and the epimorphism $ \pi :R\rightarrow S_1 $. Note that $ \pi i=0 $ and $ i\pi =\epsilon  $ (i.e.~multiplication by $ \epsilon $).

The {\bf Auslander algebra} of $ R $ is the endomorphism ring $  S={\rm End}(M) $ where $ M $ is a direct sum of one copy of each of the indecomposable (finitely presented) $R$-modules, which in this case is $ S={\rm End}(R\oplus S_1) $.  We may represent $S$ as the matrix ring
$\left( \begin{array}{cc} (R,R) &  (S_1,R) \\ (R,S_1) & (S_1,S_1) \end{array} \right) \simeq \left( \begin{array}{cc} R=k1_R\oplus k\epsilon & ki \\ k\pi & k1_{S_1} \end{array} \right)$
and this decomposes as a left module as, say, $ Q_1\oplus Q_2 $ where $ Q_i $ is the $ i$-th column. Set $ T_1=Q_1/{\rm rad}(Q_1) $.  One can check that $ Q_2\simeq {\rm rad}(Q_1)$. Right multiplication by $\left( \begin{array}{cc} 0 & 0 \\ \pi & 0 \end{array} \right)$
gives an epimorphism $ Q_1\rightarrow {\rm rad}(Q_2) $ so, noting the dimensions of these modules, we conclude $ {\rm rad}(Q_2)\simeq T_1 $.  Let $ T_2 $ denote the other simple module (the top of $ Q_2$).

Thus far we have four indecomposable modules - the two indecomposable projectives and the two simples. One can also see $ I_2 $ - the injective hull of $ S_2$, with socle $ S_2 $ and $ I_2/S_2\simeq S_1$. It is not difficult to check that there are no more indecomposable modules so, since $ S $ is an artin algebra, $ S $ is of finite representation type and we can compute its Auslander-Reiten quiver (for left modules) to be as shown, where the 1st (respectively 2nd) and 5th (respectively 6th) columns should be identified (and dotted lines indicate Auslander-Reiten translates).

$\xymatrix{ & Q_1=I_1 \ar[rd] & & & & Q_1 \\ Q_2 \ar[ru] \ar[rd] \ar@{.}[rr] & & I_2 \ar[rd] & & Q_2 \ar[rd] \ar[ru] \ar@{.}[r] & \\ \ar@{.}[r] & T_2 \ar[ur] \ar@{.}[rr] & & T_1 \ar[ur] \ar@{.}[rr] & & T_2 }$

Now we use \ref{equiv3} and the fact (see, e.g., \cite[4.9.4]{BenBk1} that the category ${\mathbb L}^{\rm eq+}_R$ of pp-sorts (in the form of the category of finitely presented functors on finitely presented right $R$-modules) is actually equivalent to the category of left modules over the Auslander algebra: $ S\mbox{-}{\rm mod}\simeq {\mathbb L}^{\rm eq+}_R$.  We deduce that, in our example, there are just five indecomposable pp-sorts (we remark that in general the Auslander algebra of an algebra of finite representation type need not itself be of finite representation type).  To identify these sorts we can use the explicit description (e.g., see \cite[p.~121]{BenBk1}) of this equivalence, which takes a pp-pair $\phi/\psi$ to $\phi(M)/\psi(M)$ regarded as a left $S$-module and, in the other direction, a finitely presented left $S$-module $_S A$ is sent to the functor ${\rm Hom}_S(_S{\rm Hom}_R(-,\,_SM_R), \,_SA)$, which can be rewritten, given a presentation of $A$, in terms of pp formulas.  In our example all this is easily computed:

\noindent $ x=x $ (meaning the sort $ (x=x)/(x=0)$) corresponds to the $ S$-module $ Q_1$;

\noindent $ x\epsilon =0 $ to $Q_2$;

\noindent $ \epsilon \mid x $ (meaning $ (\exists y \,\, x=y\epsilon ) / (x=0)$) corresponds to $ T_1$;

\noindent $ (x=x)/(\epsilon \mid x )$ to $I_2$;

\noindent $ (x\epsilon =0)/(\epsilon \mid x )$ to $T_2$.

If we evaluate each of these in turn on $ R\oplus S_1 $ then we obtain the vector spaces $ R\oplus S_1$; $ \epsilon k\oplus S_1$; $ \epsilon k\oplus 0$; $ R/\epsilon k\oplus S_1$; $0\oplus S_1$, each equipped with the natural $ S$-action.

To illlustrate the localisation process of \ref{ppsortquotpp} (passing to a definable subcategory, mirrored by localisation of the category of pp-sorts), let $ {\mathcal D} $ be the definable subcategory of $ {\rm Mod}\mbox{-}R $ generated by $ R_R$. The annihilator $ {\mathcal S}_{{\mathcal D}} $ of $ {\mathcal D} $ in $ {\mathbb L}^{\rm eq+}_R $ is, checking the list above, the Serre subcategory generated by $ (x\epsilon =0)/(\epsilon \mid x) $, so we have $ {\mathbb L}^{\rm eq+}({\mathcal D})\simeq {\mathbb L}^{\rm eq+}_R/\langle (x\epsilon =0)/(\epsilon \mid x)\rangle  \simeq  S\mbox{-}{\rm mod}/\langle T_2\rangle $, where $\langle - \rangle$ denotes the Serre subcategory generated by ($-$). Therefore, in the quotient category, since $ T_2 $ becomes isomorphic to $ 0$, all of $ Q_2$, $ T_1 $ and $ I_2 $ become isomorphic and we see that there are just two indecomposables, a simple and a non-split self-extension of that simple - and we recognise that the quotient category looks very much like $ {\rm mod}\mbox{-}R$, indeed \ref{cohexgen} below tells us that it is equivalent to $ ({\rm mod}\mbox{-}R)^{\rm op} $ which, for this particular (commutative) ring, is indeed equivalent to $ {\rm mod}\mbox{-}R$.
\end{example}

\section{Elimination of pp-imaginaries}\label{secelimimag}\marginpar{secelimimag}

It has been known for a long time that the theory of modules over a ring $R$ has {\bf elimination of quantifiers} (every formula is equivalent to one without quantifiers) iff $R$ is {\bf von Neumann regular} (for every $ r\in R $ there is $ s\in R $ with $ rsr=r$, equivalently, every finitely generated (say) right ideal is a direct summand of $R$), see \cite[16.16, 16.17]{PreBk} and references given there.  Elimination of quantifiers in general implies only that a pp formula is equivalent to a quantifier-free formula but for a definable additive category ${\mathcal D}$ its theory has elimination of quantifiers (in a particular language) iff every pp formula is equivalent to a conjunction of atomic formulas; the proof at \cite[16.5(a)]{PreBk} for definable subcategories of the category of modules over a ring works in the general case.  First we show that there is a similar reduction for pp-imaginaries.  We do assume more acquaintance with basic model theory in this section.

Elimination of quantifiers or of imaginaries is always with respect to some language ${\mathcal L}$.  Moreover, for the latter we need also to specify a set, ${\mathcal H}$, of ``home sorts" - that is, a set of objects of the category of pp sorts - which we may as well be assume to be closed under finite products.  A theory $T$ in ${\mathcal L}$ has {\bf elimination of pp-imaginaries to} ${\mathcal H}$ if every pp-definable sort is in definable bijection with a definable subset of some sort in ${\mathcal H}$.  We show that the definable bijection in this definition may be taken to be pp-definable provided that the class of models of $T$ is closed under (finite) direct sums.

\begin{prop}\label{elimimag+}\marginpar{elimimag+} Suppose that ${\mathcal D}$ is a definable additive category, definable by closure of a set of pp-pairs in the language ${\mathcal L}$ and let ${\mathcal H}$ be an additive subcategory of ${\mathbb L}^{\rm eq+}({\mathcal D})$ (where the latter is defined with respect to ${\mathcal L}$).  If the ${\mathcal L}$-theory of ${\mathcal D}$ has elimination of pp-imaginaries to ${\mathcal H}$ then every object of ${\mathbb L}^{\rm eq+}({\mathcal D})$ is pp-definably isomorphic to a subobject of some object of ${\mathcal H}$.
\end{prop}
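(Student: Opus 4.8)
The plan is to exploit the dictionary between the model theory and the abelian category ${\mathbb L}^{\rm eq+}({\mathcal D})$: a pp-pair $\phi/\psi$ is, as a functor, an object of that abelian category, and a pp-definable map between pp-pairs is exactly a morphism there. The key extra ingredient is Theorem~\ref{ppeq}(2): every definable subset of a (product of) pp-sort(s) in a model $D$ is a finite boolean combination of pp-definable subgroups (cosets if constants are allowed, but since ${\mathcal D}$ is closed under finite direct sums I can, as in the classical argument, reduce the constant case to the parameter-free case by a standard translation, passing to $D\oplus D$ and a diagonal). So ``elimination of imaginaries to ${\mathcal H}$'' is really a statement about putting arbitrary pp-pairs into definable bijection with something built from ${\mathcal H}$.

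First I would prove the easy direction ($\Leftarrow$). Suppose every object of ${\mathbb L}^{\rm eq+}({\mathcal D})$ embeds into an object of ${\mathcal H}$. Take any definable set $X$ in a model $D$; by pp-elimination of quantifiers it is a finite boolean combination of pp-definable subgroups of some sort $\overline{x}$, hence is a definable subset of the sort $(\overline{x}=\overline{x})/(\overline{x}=0)$, i.e.\ of some object $A$ of ${\mathbb L}^{\rm eq+}({\mathcal D})$. Choose a monomorphism $\iota\colon A\rightarrowtail H$ with $H\in{\mathcal H}$; this $\iota$ is given by a pp formula defining an injective map uniformly across ${\mathcal D}$, so $\iota$ restricts to a definable bijection between $X$ and $\iota(X)\subseteq H$. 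That realises $X$ inside a sort of ${\mathcal H}$, which is what we need. (A minor point: one must check $\iota$ really is injective as a map of sets on each $D$, not just monic in the functor category; but monomorphisms in ${\mathbb L}^{\rm eq+}({\mathcal D})\simeq ({\mathcal D},{\bf Ab})^{\rightarrow\prod}$ are exactly the pointwise-injective transformations, by exactness of evaluation functors, so this is automatic.)

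For the converse ($\Rightarrow$), assume $T={\rm Th}_{\mathcal L}({\mathcal D})$ has elimination of imaginaries to ${\mathcal H}$ and let $A=\phi/\psi$ be an arbitrary object of ${\mathbb L}^{\rm eq+}({\mathcal D})$. Viewing $A$ as a sort, its underlying definable set (namely the full sort $A$ itself, in any $D$) is, by hypothesis, in definable bijection with a definable subset $Y$ of some sort $H\in{\mathcal H}$. Unwinding: there is a pp formula — after the reduction above we may assume pp, not merely a boolean combination — defining, uniformly in $D$, an injective map $A(D)\hookrightarrow H(D)$ whose image is $Y(D)$. Injectivity uniformly over ${\mathcal D}$ means this pp-definable map is a monomorphism in ${\mathbb L}^{\rm eq+}({\mathcal D})$, so $A$ embeds into $H\in{\mathcal H}$, as required. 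The point where care is genuinely needed — and the main obstacle — is this reduction from a definable bijection (a priori given by a boolean combination of pp formulas, and between a pp-pair and a mere \emph{subset} of a sort in ${\mathcal H}$) to an honest pp-monomorphism of functors: one must argue that the graph of such a bijection, being a definable subgroup of $A\times H$ (it is a subgroup because the bijection, composed with the group structure, is additive — a definable bijection between the groups $A(D)$ and $Y(D)$ that is a group map, which one gets since $A$ is a group object and ${\mathcal D}$ is closed under finite direct sums, so one can diagonalise to force additivity exactly as in \cite[Ch.~4]{PreNBK} or \cite{PreBk}), is therefore \emph{pp}-definable by Theorem~\ref{ppeq}(2) applied to the subgroup $\Gamma\le A\times H$, and that its first projection being an iso and second projection being monic makes it the graph of a mono $A\rightarrowtail H$. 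Once that translation is in place the proposition is just the statement that ``every sort is in definable bijection with a subset of ${\mathcal H}$'' unwinds, on the functor side, to ``every object of ${\mathbb L}^{\rm eq+}({\mathcal D})$ is a subobject of an object of ${\mathcal H}$''.
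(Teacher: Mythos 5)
Your left-to-right direction is fine and is essentially the paper's. In the converse direction, however, the two steps you yourself flag as ``the main obstacle'' are exactly the ones you do not prove, and as written they fail. First, the claim that the definable bijection from the sort $\phi/\psi$ onto a definable subset of $H$ is additive (so that its graph $\Gamma$ is a subgroup of $(\phi/\psi)\times H$) is not a quotable fact: there is no result of the form ``a $0$-definable injection between sorts is automatically a group homomorphism'' to ``diagonalise'' from in \cite{PreBk} or \cite{PreNBK}; additivity is a \emph{consequence} of the graph being pp-definable, which is the whole content of this direction, so your argument is circular at the decisive point. Second, even if one grants that $\Gamma$ is a definable subgroup, Theorem~\ref{ppeq}(2) does not yield that it is pp-definable: that theorem only says a definable set is a finite \emph{boolean combination} of pp-definable subgroups (or cosets), and the passage from ``definable subgroup'' to ``pp-definable subgroup'' is precisely what requires an argument, not a citation. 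So the reduction from the given definable embedding to a pp-definable monomorphism is a genuine gap.

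The paper closes this gap as follows (and this is what your outline needs in place of the appeals to diagonalisation and to \ref{ppeq}(2)): write the formula $\chi(x,y)$ defining the embedding in disjunctive normal form $\bigvee_i\chi_i$; using closure of ${\mathcal D}$ under finite direct sums (choose, for each $i$, a witness of $\chi\wedge\neg\chi_i$ in some $D_i$ and pass to $\bigoplus_i D_i$), conclude that $\chi$ is equivalent modulo the theory of ${\mathcal D}$ to a single disjunct $\theta\wedge\bigwedge_j\neg\theta_j$ with $\theta$ a conjunction of atomic formulas; then, since $\theta(a,D)=\{b\}\cup\bigcup_j\theta_j(a,D)$ is a covering of a coset by finitely many cosets of definable subgroups, B.~H.~Neumann's lemma together with the fact that indices of definable subgroups here are $1$ or infinite forces $\theta(a,D)=\{b\}$. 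Hence the pp (atomic) part $\theta$ already defines the same injective function, giving the required monomorphism $\phi/\psi\rightarrowtail H$ in ${\mathbb L}^{\rm eq+}({\mathcal D})$. Until you supply an argument of this kind, your step from the definable bijection to a subobject inclusion in ${\mathbb L}^{\rm eq+}({\mathcal D})$ is unsupported.
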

\begin{proof}  We use a kind of argument also seen, e.g., in \cite[proof of 6.3.5]{Perera}.  Suppose that $\phi/\psi$ is a pp sort and that there is a definable embedding $\phi/\psi \rightarrow H$ with $H\in {\mathcal H}$, defined by the formula $\chi(x,y)$ say, where $x$ is a variable of sort $\phi/\psi$ (or of sort $\phi$ if one prefers) and $y$ is of sort $H$.  Then $\chi$ has the form $\bigvee_i \chi_i$ where each $\chi_i$ is a conjunction of atomic and negated atomic formulas.  If $\chi$ were not equivalent (modulo the theory of ${\mathcal D}$) to any $\chi_i$ then we choose, for each $i$, some $D_i\in {\mathcal D}$ and $(a_i,b_i)$ from $D_i$ with $D_i\models \chi (a_i,b_i)\wedge \neg \chi_i(a_i,b_i)$.  Then $\bigoplus_i D_i$ is in ${\mathcal D}$ but contains the tuple $((a_i)_i,(b_i)_i)$ which satisfies $\chi$ but no $\chi_i$, yet lies in sort $\phi/\psi$, contradicting that $\chi$ defines a total function.

So $\chi$ has the form $\theta \wedge \bigwedge_j \neg\theta_j$ where $\theta$ is a conjunction of atomic formulas and the $\theta_j$ are atomic formulas.  We may assume that $\exists y \chi(x,y)$ is equivalent to $(\phi/\psi)(x)$.  Suppose that $D\in {\mathcal D}$ and $D\models \chi(a,b)$.  Then $\theta(a,D) =\{ b\} \cup \bigcup_j \theta_j(a,D)$.  All these are cosets of subgroups (of $H(D)$) so, by Neumann's Lemma (see, e.g., \cite[2.12]{PreBk}), and since all indices of one definable subgroup in another are infinite if not equal to 1, and since $b\notin \theta_j(a,D)$ for any $j$, we deduce that $\theta(a,D) =\{ b\}$.  That is, $\theta$ already defines a function from $\phi/\psi$ to $H$ which, since we have just seen that for each $a$ of sort $\phi/\psi$, $\theta(a,D)$ is a singleton and since $\chi(a,b)$ implies $\theta(a,b)$, must be injective, as required.
\end{proof}

It was shown at \cite[10.2.40]{PreNBK} that the theory of modules over a ring $R$ has {\bf elimination of pp-imaginaries}, meaning elimination of pp-imaginaries to the additive category with objects the finite powers $(R_R,-)^n$ of the home sort, iff $R$ is von Neumann regular.  For a ring, that is the relevant set of ``home sorts"; extension to a skeletally small preadditive category ${\mathcal R}$ is straightforward.  The elementwise definition of von Neumann regularity (where now $r$ and $s$ are to be read as morphisms of ${\mathcal R}$) can be used and is equivalent to the condition that every finitely generated ideal contained in a representable functor $(P,-)$ ($P\in {\mathcal R}$) be a direct summand.  The proofs, that both pp-elimination of quantifiers and elimination of pp-imaginaries are equivalent to von Neumann regularity, that are given in \cite{PreNBK} apply equally well to ${\mathcal R}$ as to $R$.  (Elimination of quantifiers is \cite[10.2.38]{PreNBK}; it is not said explicitly there but follows by \cite[10.2.14(c)]{PreNBK} and \cite[16.5]{PreBk}.)  So we have the following.

\begin{theorem}\label{elimelim}\marginpar{elimelim} For a skeletally small preadditive category ${\mathcal R}$, the following are equivalent:

\noindent (i) ${\mathcal R}$ is von Neumann regular;

\noindent (ii) the theory of ${\mathcal R}$-modules (right, equivalently left) has elimination of quantifiers;

\noindent (iii) the theory of ${\mathcal R}$-modules (right, equivalently left) has pp-elimination of quantifiers;

\noindent (iv) the theory of ${\mathcal R}$-modules has elimination of pp-imaginaries.
\end{theorem}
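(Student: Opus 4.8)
The plan is to reproduce, for a ringoid ${\mathcal R}$, the cycle of implications that proves the ring version, using the dictionary assembled in Sections \ref{secflatRR} and \ref{secppsort} to transport each step. First I would fix, as the set ${\mathcal H}$ of home sorts, the additive (hence closed under finite products) subcategory of ${\mathbb L}^{\rm eq+}_{\mathcal R}$ generated by the forgetful sorts $(x_P=x_P)/(x_P=0)$, $P\in{\mathcal R}$ --- equivalently the functors $M\mapsto MP$, which are exactly the representable right modules $(-,P)$ viewed in ${\mathbb L}^{\rm eq+}_{\mathcal R}$. Since the class of all ${\mathcal R}$-modules is closed under finite direct sums, Proposition \ref{elimimag+} applies directly and yields (iii)$\Leftrightarrow$(iii)$^+$, the latter condition now reading: every object of ${\mathbb L}^{\rm eq+}_{\mathcal R}$ is isomorphic to a subobject of a finite product of forgetful sorts. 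It then remains to establish (i)$\Leftrightarrow$(ii)$^+$, (i)$\Leftrightarrow$(iii)$^+$, and (ii)$^+$$\Leftrightarrow$(ii).

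For (i)$\Leftrightarrow$(ii)$^+$ I would first record that the elementwise form of von Neumann regularity for morphisms of ${\mathcal R}$ ($\forall r\,\exists s\; rsr=r$) is equivalent to the ideal-theoretic form (every finitely generated left subideal of a representable functor $(-,P)$ is a direct summand), the passage between the two being the usual quasi-inverse/idempotent argument. Granting this, the proof of \cite[10.2.38]{PreNBK} transcribes: if ${\mathcal R}$ is von Neumann regular it is in particular right coherent, so by Theorem \ref{cohfgpp} every pp formula $\phi(x)$ for right ${\mathcal R}$-modules (say $x$ of sort $P$) has $\phi({\mathcal R}_{\mathcal R})$ a finitely generated left subideal of $(P,-)$, hence a direct summand cut out by an idempotent $e\in(P,P)$, whence $\phi$ is equivalent modulo the theory of ${\mathcal R}$-modules to the atomic divisibility formula $e\mid x$ (using that a pp formula is determined by its values on the representable modules together with the direct-limit reduction of Section \ref{secppsort}); assembling boolean combinations then gives pp-elimination of quantifiers. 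Conversely, pp-elimination of quantifiers applied to the divisibility formulas $\exists y\,(x=yr)$ forces, for each morphism $r$ of ${\mathcal R}$, an $s$ with $rsr=r$. The one-variable statement suffices; in the additive case the several-variable statement follows by forming direct sums, exactly as in the reference.

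For (i)$\Leftrightarrow$(iii)$^+$ I would follow \cite[10.2.40]{PreNBK}. If ${\mathcal R}$ is von Neumann regular then, by the ideal-theoretic form together with Theorem \ref{cohringoid}, every finitely presented right ${\mathcal R}$-module is projective, which translates --- through the identification ${\mathbb L}^{\rm eq+}_{\mathcal R}\simeq({\rm mod}\mbox{-}{\mathcal R},{\bf Ab})^{\rm fp}$ of Theorem \ref{equiv3} --- into the statement that every pp-definable-subgroup functor $F_\phi$ is a direct summand of the forgetful functor $\prod_i(-,P_i)$ on the sorts $P_i$ of the free variables of $\phi$. Hence for a pp-pair $\phi/\psi$ the subfunctor $F_\psi$ is a direct summand of $F_\phi$, so $F_{\phi/\psi}=F_\phi/F_\psi$ is a direct summand, in particular a subobject, of a finite product of forgetful sorts, and Proposition \ref{elimimag+} gives (iii)$^+$. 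Conversely, from (iii)$^+$ one embeds a suitably chosen pp-pair (for instance the image pair of a divisibility map) into a forgetful sort and unwinds the embedding, as in \cite[10.2.40]{PreNBK}, to recover $rsr=r$. Finally (ii)$^+$$\Leftrightarrow$(ii) is \cite[10.2.14(c)]{PreNBK}: the nontrivial direction combines pp-elimination of quantifiers in the sense of Theorem \ref{ppeq} with (ii)$^+$ to reduce an arbitrary formula to a quantifier-free one, and that argument nowhere uses that ${\mathcal R}$ has a single object.

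The step I expect to demand the most care is the structural translation in the von Neumann regular case --- namely that the ringoid analogue of ``finitely presented modules are projective'' genuinely makes every pp-definable-subgroup functor a direct summand of a forgetful functor --- since this is where the many-sorted bookkeeping must be handled honestly, including the reduction (via the narrower, subfunctor-of-representable notion of ideal) of several-variable pp formulas to the additive hull of ${\mathcal R}$. Everything else is the proof in \cite{PreNBK} copied with $R$ replaced by ${\mathcal R}$, using the generalisations already in place: Theorems \ref{cohringoid}, \ref{cohfgpp}, \ref{equiv3}, \ref{ppsortquotpp} and Proposition \ref{elimimag+}.
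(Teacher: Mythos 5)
Your proposal follows essentially the same route as the paper: fix the finite products of forgetful sorts as the home sorts, use Proposition \ref{elimimag+} to identify elimination of imaginaries with its pp version, and transport the arguments of \cite[10.2.38, 10.2.40, 10.2.14(c)]{PreNBK} to ${\mathcal R}$ via the ideal-theoretic form of von Neumann regularity and Theorems \ref{cohringoid}, \ref{cohfgpp} and \ref{equiv3}, which is exactly what the paper does. The only nitpick is terminological: $e\mid x$ is not itself atomic, though for idempotent $e$ it is equivalent to the atomic formula $xe=x$, which is what the quantifier-elimination step needs.
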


The underlying reason that elimination of quantifiers and elimination of pp-imaginaries are equivalent would seem to be the characterisation, see \cite[10.2.14]{PreNBK}, due to Burke, of subsorts of a power $(R,-)^n$ of the home sort as those objects of ${\mathbb L}_R^{\rm eq+}$ of projective dimension $\leq 1$, and of those defined by conjunctions of atomic formulas as being those of projective dimension $0$ (that is, projective, equivalently a representable functor $(A,-)$ for $A$ some finitely presented ${\mathcal R}$-module).  Combined with Auslander's result (\cite[p.~205]{AusCoh}) that if there is a non-projective object of ${\mathbb L}_R^{\rm eq+}$ then there is an object of projective dimension $=2$ (the maximum possible for objects of these categories), this shows that elimination of pp-imaginaries, that is, every object having projective dimension $\leq 1$, already implies elimination of quantifiers.

It is not clear what can be said of more general definable categories ${\mathcal D}$ because there seems to be no clear candidate for a canonical choice of set of home sorts.  Of course one may choose ${\mathcal H}$ to contain all pp sorts, in which case we have both eliminations but they are devoid of content:  we may regard ${\mathcal D}$ as a definable subcategory of ${\mathbb L}^{\rm eq+}({\mathcal D})\mbox{-}{\rm Mod}$; then every pp-sort for the language of ${\mathbb L}^{\rm eq+}({\mathcal D})$-modules is, when restricted to ${\mathcal D}$, isomorphic to a representable=projective sort (that is, applying the $^{\rm eq+}$ construction twice is equivalent to doing it once).  In the case that ${\mathcal D}$ is a category of modules this issue is resolved by the fact we have just seen, that the sorts of projective dimension $\leq 1$ are those which are the natural home sorts whenever we present the category as a module category (projective dimension, being a categorical invariant, is preserved under any representation of ${\mathcal D}$ as a definable subcategory).  For a general definable category ${\mathcal D}$, the category ${\mathbb L}^{\rm eq+}({\mathcal D})$ of sorts can be any small abelian category and so, for instance, there may be no projective objects at all - the category, ${\rm fin}\mbox{-}{\mathbb Z}$, of finite abelian groups is an example of a small abelian category with no non-zero projectives.  Let us compute the corresponding definable category which, abstractly, is ${\mathcal D} ={\rm Ex}({\rm fin}\mbox{-}{\mathbb Z},{\bf Ab})$ (if ${\mathcal A}$ is a skeletally small abelian category then the definable category for which it is the category of pp-imaginaries is the category, ${\rm Ex}({\mathcal A}, {\bf Ab})$, of exact functors from ${\mathcal A}$ to ${\bf Ab}$, see, say \cite{PreADC}).

\begin{example}\label{finZ}\marginpar{finZ}.  We compute ${\mathcal D} ={\rm Ex}({\rm fin}\mbox{-}{\mathbb Z},{\bf Ab})$.  In fact, it seems a little easier to compute the elementary dual (see the next section), ${\mathcal D}^{\rm d} ={\rm Ex}(({\rm fin}\mbox{-}{\mathbb Z})^{\rm op},{\bf Ab})$.  We have the inclusion $i:({\rm fin}\mbox{-}{\mathbb Z})^{\rm op} \rightarrow ({\rm mod}\mbox{-}{\mathbb Z})^{\rm op}$ and the definable category corresponding to the latter is, by \ref{cohexgendual} below, ${\rm Abs}\mbox{-}{\mathbb Z} = {\rm Inj}\mbox{-}{\mathbb Z}$ - the category of injective=divisible abelian groups.  This (exact) inclusion of abelian categories corresponds, see e.g.~\cite{PreRajShv} or \cite{PreADC}, to an interpretation functor ${\rm Inj}\mbox{-}{\mathbb Z} \rightarrow {\mathcal D}^{\rm d}$ which, since $({\rm fin}\mbox{-}{\mathbb Z})^{\rm op}$ is a Serre subcategory of $({\rm mod}\mbox{-}{\mathbb Z})^{\rm op}$ is a definable quotient category of ${\rm Inj}\mbox{-}{\mathbb Z}$ in the sense of \cite[\S 5]{KraEx}.  The definition is as follows.  Suppose that ${\mathcal C}$ is a definable category and that ${\mathcal S}$ is a Serre subcategory of ${\rm fun}({\mathcal C}) =({\mathcal C}, {\bf Ab})^{\rightarrow \prod}$; then ${\mathcal S}$ is, in particular, an abelian category and the corresponding definable category ${\mathcal C}'' ={\rm Ex}({\mathcal S}, {\bf Ab})$ is the corresponding {\bf definable quotient category} of ${\mathcal C}$.  Krause shows the following where we write ${\rm Pinj}({\mathcal D}$ for the full subcategory on the pure-injective ($=$ pp-saturated) objects of ${\mathcal D}$.

\begin{theorem}\label{defquot}\marginpar{defquot} (\cite[5.1]{KraEx}) Suppose that ${\mathcal A}$ is a small abelian category, that ${\mathcal S}$ is a Serre subcategory, that ${\mathcal C} ={\rm Ex}({\mathcal A}, {\bf Ab})$, that ${\mathcal C}'' ={\rm Ex}({\mathcal S}, {\bf Ab})$, and that ${\mathcal C}' ={\rm Ex}({\mathcal A}_{\mathcal S}, {\bf Ab})$, where ${\mathcal A}_{\mathcal S}$ denotes the localisation of ${\mathcal A}$ at ${\mathcal S}$ (so ${\mathcal C}'$ is a definable subcategory of ${\mathcal C}$ and ${\mathcal C}''$ is the corresponding definable quotient category of ${\mathcal C}$).

Then ${\rm Pinj}({\mathcal C}'') \simeq {\rm Pinj}({\mathcal C})/{\rm Pinj}({\mathcal C}')$ where the latter is the stable quotient category of ${\rm Pinj}({\mathcal C})$ with respect to ${\rm Pinj}({\mathcal C}')$ - that is, the objects are those of ${\rm Pinj}({\mathcal C})$ and the morphisms are those of ${\rm Pinj}({\mathcal C})$ modulo those which factor through an object of ${\rm Pinj}({\mathcal C}')$.
\end{theorem}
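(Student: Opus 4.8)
The plan is to move the whole statement into the locally coherent Grothendieck category attached to ${\mathcal C}$, where it becomes a short exercise in hereditary torsion theory. Recall (see \cite{KraEx}, \cite{PreNBK}) that ${\mathcal C}$ embeds as a definable subcategory -- the full subcategory ${\rm Abs}(\widetilde{\mathcal C})$ of absolutely pure ($=$ fp-injective) objects -- of a locally coherent Grothendieck category $\widetilde{\mathcal C}$ whose category of finitely presented objects is, up to the natural duality, ${\rm fun}({\mathcal C})$, and that this embedding identifies ${\rm Pinj}({\mathcal C})$ with the full subcategory ${\rm Inj}(\widetilde{\mathcal C})$ of injective objects: an absolutely pure object is injective in $\widetilde{\mathcal C}$ iff every monomorphism out of it splits, any such monomorphism is pure, so ``injective in $\widetilde{\mathcal C}$'' is the same as ``pure-injective'', while conversely injectives are absolutely pure. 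Under this dictionary the Serre subcategory ${\mathcal S}$ of ${\rm fun}({\mathcal C})$ determines a localising subcategory ${\mathcal T}$ of $\widetilde{\mathcal C}$ of finite type with ${\mathcal T}^{\rm fp} \simeq {\mathcal S}$, so ${\mathcal T}$ is itself the locally coherent Grothendieck category attached to ${\mathcal C}'' ={\rm Ex}({\mathcal S},{\bf Ab})$ and hence ${\rm Pinj}({\mathcal C}'') \simeq {\rm Inj}({\mathcal T})$; likewise $\widetilde{\mathcal C}/{\mathcal T}$ is the one attached to ${\mathcal C}' ={\rm Ex}({\mathcal A}_{\mathcal S},{\bf Ab})$, and, tracing through the correspondences of \ref{equiv3} and \ref{ppsortquotpp}, the inclusion ${\rm Pinj}({\mathcal C}') \hookrightarrow {\rm Pinj}({\mathcal C})$ becomes the inclusion of those injectives of $\widetilde{\mathcal C}$ which are ${\mathcal T}$-closed; for an injective object this is the same as being ${\mathcal T}$-torsion-free, since ${\rm Ext}^1$ into an injective vanishes. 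Write $t$ for the torsion radical of the hereditary torsion theory on $\widetilde{\mathcal C}$ with torsion class ${\mathcal T}$.

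Next I would introduce the comparison functor directly, namely $\Phi \colon {\rm Pinj}({\mathcal C}) = {\rm Inj}(\widetilde{\mathcal C}) \to {\rm Inj}({\mathcal T}) = {\rm Pinj}({\mathcal C}'')$ given on objects by $E \mapsto t(E)$ and on morphisms by $f \mapsto f|_{t(E)}$. This is well defined: morphisms carry a torsion subobject into a torsion subobject, and $t(E)$ is injective \emph{in} ${\mathcal T}$ whenever $E$ is injective in $\widetilde{\mathcal C}$, because given a monomorphism $X \hookrightarrow Y$ in ${\mathcal T}$ and a map $X \to t(E)$ one may extend the composite $X \to E$ to $Y \to E$ (injectivity of $E$), and its image, being a quotient of the torsion object $Y$, is torsion and so lands in $t(E)$. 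I then claim that $\Phi$ annihilates exactly those morphisms of ${\rm Pinj}({\mathcal C})$ which factor through an object of ${\rm Pinj}({\mathcal C}')$. On one side, if $f = h k$ with $k \colon E \to J$, $h \colon J \to E'$ and $J \in {\rm Pinj}({\mathcal C}')$, then $J$ is ${\mathcal T}$-torsion-free, so $t(J) = 0$ and $\Phi(f) = t(h)\,t(k) = 0$. On the other, if $t(E) \subseteq \ker f$ then $f$ factors as $E \twoheadrightarrow E/t(E) \to E'$; the injective hull $E\big(E/t(E)\big)$ in $\widetilde{\mathcal C}$ is torsion-free (the injective hull of a torsion-free object is torsion-free, ${\mathcal T}$ being hereditary), hence a ${\mathcal T}$-closed injective, hence an object of ${\rm Pinj}({\mathcal C}')$, and $E/t(E) \to E'$ extends along $E/t(E) \hookrightarrow E\big(E/t(E)\big)$ since $E'$ is injective, so $f$ factors through ${\rm Pinj}({\mathcal C}')$. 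Consequently $\Phi$ induces a faithful functor $\overline{\Phi} \colon {\rm Pinj}({\mathcal C})/{\rm Pinj}({\mathcal C}') \to {\rm Pinj}({\mathcal C}'')$.

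Finally I would check that $\overline{\Phi}$ is full and essentially surjective, which completes the proof. For essential surjectivity, given $N \in {\rm Inj}({\mathcal T})$ let $E(N)$ be its injective hull in $\widetilde{\mathcal C}$; then $t\big(E(N)\big)$ is injective in ${\mathcal T}$ (as above) and is an essential extension of $N$, while $N$, being injective in ${\mathcal T}$, admits no proper essential extension inside ${\mathcal T}$, whence $t\big(E(N)\big) = N$, i.e.\ $\overline{\Phi}\big(E(N)\big) \cong N$. For fullness, a morphism $g \colon t(E) \to t(E')$ of ${\mathcal T}$, viewed as a map $t(E) \to E'$, extends along $t(E) \hookrightarrow E$ to some $\widetilde{g} \colon E \to E'$ by injectivity of $E'$, and $\widetilde{g}|_{t(E)} = g$, so $\overline{\Phi}(\widetilde{g}) = g$. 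Hence $\overline{\Phi}$ is an equivalence ${\rm Pinj}({\mathcal C})/{\rm Pinj}({\mathcal C}') \simeq {\rm Pinj}({\mathcal C}'')$, which is the assertion.

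The real content lies in the first paragraph: erecting the locally coherent Grothendieck category $\widetilde{\mathcal C}$, the identification ${\rm Pinj}({\mathcal C}) = {\rm Inj}(\widetilde{\mathcal C})$ together with its instances for ${\mathcal C}'$ and ${\mathcal C}''$, and -- via the finite-type localisation ${\mathcal T}$ cut out by ${\mathcal S}$ -- the translation of ``${\mathcal C}'$ is a definable subcategory'' and ``${\mathcal C}''$ is the corresponding definable quotient'' into ``${\mathcal T}$-closed ($=$ ${\mathcal T}$-torsion-free)'' and ``torsion''. This is exactly the place where one leans on the machinery relating definable categories, their categories of pp-imaginaries, and finite-type localisations of locally coherent categories (\ref{equiv3}, \ref{ppsortquotpp}, \cite{KraEx}, \cite{PreNBK}, \cite{PreADC}); once those identifications are in hand, the comparison functor $\Phi$ above and the three short torsion-theoretic verifications require no further input.
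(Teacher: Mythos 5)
The paper itself offers no argument for this statement: it is imported verbatim from Krause (\cite[5.1]{KraEx}), so there is no in-paper proof to compare yours against. Judged on its own terms, your proposal is correct, and it is essentially the standard route (and, as far as the original source goes, Krause's own): pass to the locally coherent Grothendieck category $\widetilde{\mathcal C}$ with ${\mathcal C}\simeq{\rm Abs}(\widetilde{\mathcal C})$ and ${\rm Pinj}({\mathcal C})={\rm Inj}(\widetilde{\mathcal C})$, let ${\mathcal T}$ be the finite-type localising subcategory generated by (the image of) ${\mathcal S}$, identify ${\rm Pinj}({\mathcal C}'')$ with ${\rm Inj}({\mathcal T})$ and ${\rm Pinj}({\mathcal C}')$ with the ${\mathcal T}$-torsionfree ($=$ ${\mathcal T}$-closed) injectives, and then check that $E\mapsto t(E)$ induces the stated equivalence. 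Your three torsion-theoretic verifications (that $t(E)$ is injective in ${\mathcal T}$; that the ideal killed by $t$ is exactly the ideal of maps factoring through a torsionfree injective, using heredity of the torsion theory and the injective hull of $E/t(E)$; and fullness plus essential surjectivity via injective hulls) are all sound. The only place where care is needed, and which you rightly flag as where the real content sits, is the dictionary itself: with the paper's conventions ${\rm fun}({\mathcal C})\simeq{\mathcal A}$ while $\widetilde{\mathcal C}^{\,\rm fp}\simeq{\mathcal A}^{\rm op}$, so ${\mathcal S}$ enters $\widetilde{\mathcal C}$ only after an op-twist; your phrase ``up to the natural duality'' covers this, but in a written-out version you should make the twist explicit, since it is exactly what makes ${\rm Abs}({\mathcal T})\simeq{\rm Ex}({\mathcal S},{\bf Ab})$ and ${\rm Abs}(\widetilde{\mathcal C}/{\mathcal T})\simeq{\rm Ex}({\mathcal A}_{\mathcal S},{\bf Ab})$ come out with the correct variance. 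With that caveat spelled out, your argument is a complete and faithful reconstruction of the cited result.
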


Clearly, in our case, the definable quotient category of ${\rm Inj}\mbox{-}{\mathbb Z}$ corresponding to ${\mathcal D}^{\rm d}$ consists of the torsionfree divisible groups.  We have chosen to work with the dual/opposite categories exactly because every object of ${\rm Inj}\mbox{-}{\mathbb Z}$ is pure-injective and so this theorem gives us a description of the whole category ${\mathcal D}^{\rm d}$, namely it is ${\rm Inj}\mbox{-}{\mathbb Z}/{\rm Mod}\mbox{-}{\mathbb Q}$ and that, since there are no non-zero morphisms from torsion divisible injectives to torsionfree ones, is the category of torsion divisible abelian groups.

We conclude that ${\rm Ex}(({\rm fin}\mbox{-}{\mathbb Z})^{\rm op},{\bf Ab})$ is the category of torsion divisible abelian groups and, by elementary duality, that ${\rm Ex}({\rm fin}\mbox{-}{\mathbb Z},{\bf Ab})$ is the category of reduced (without nonzero divisible submodules) flat abelian groups.
\end{example}

Finally in this section, note that the tensor embedding, $M\mapsto M\otimes_R(-)$, of ${\rm Mod}\mbox{-}{\mathcal R}$ into $({\mathcal R}\mbox{-}{\rm mod})\mbox{-}{\rm Mod} = ({\mathcal R}\mbox{-}{\rm mod}, {\bf Ab})$ already eliminates pp-imaginaries and quantifiers for the theory of $ {\rm Mod}\mbox{-}{\mathcal R}$ (\cite[11.1.44]{PreNBK}) - it is not necessary to go to the larger category ${\mathbb L}^{\rm eq+}_{\mathcal R}\mbox{-}{\rm Mod}$.  Of course, there is no reason why the theory of (${\mathcal R}\mbox{-}{\rm mod}$)-modules should have these eliminations.  But we can repeat the process, replacing ${\mathcal R}$ by ${\mathcal R}\mbox{-}{\rm mod}$, and then repeating again, through the natural numbers.  This gives a directed system of embeddings of module categories, and hence also of languages, and the direct limit will be a module category (the embeddings preserve the, generating at each stage, projective objects) and will have elimination of quantifiers and pp-imaginaries since every sort is definable using just finitely many symbols and so already appears, and embeds in a projective sort, at some stage.

\section{Duality}\label{secdual}\marginpar{secdual}

Recall that if $ {\mathcal D}={\rm Ex}({\mathcal A},{\bf Ab}) $ is a definable category then we denote its (elementary) dual ${\rm Ex}({\mathcal A}^{\rm op},{\bf Ab}) $ by $ {\mathcal D}^{\rm d}$ and (see \cite[10.3.4]{PreNBK}) for such a dual pair the categories of pp pairs are opposite to each other: $ {\mathbb L}^{\rm eq+}({\mathcal D}^{\rm d}) \simeq {\mathcal A}^{\rm op} \simeq ({\mathbb L}^{\rm eq+}({\mathcal D}))^{\rm op}$.  This duality works at the level of pp formulas and pp-pairs as follows.

If $R$ is a ring and if $ \phi (\overline{x})$, where $\overline{x} =(x_1,\dots, x_n) $, is the condition $
\exists \overline{y}\, (\overline{x}\, \overline{y})H=0 $ for right modules, where $H= \left( \begin{array}{c} A \\ B \end{array}
\right)$ with the matrix $A$ having $n$ rows, then the
{\bf dual} condition, $ D\phi(\overline{x}) $ (where now $ \overline{x} $ is a column vector of length $n$ and
will be substituted by elements from {\em left} modules) is defined to
be  $\exists \overline{z} \left( \begin{array}{cc} I & A \\ 0 & B
\end{array} \right) \left( \begin{array}{c} \overline{x}\\ \overline{z}
\end{array} \right) =0 $.  For the description of the duality from left to right, also denoted $D$, we just transpose everything.  This is a duality in the sense that $DD\phi$ is equivalent to $\phi$ and $\psi\leq \phi$ iff $D\phi \leq D\psi$ (\cite{PreDual} or \cite[\S 1.3]{PreNBK}).

The description in the case of rings ${\mathcal R}$ with many objects is formally the same but in the details one has to pay attention to the sorting of symbols.  Let us again use subscripts to indicate the sorts of variables and also, if $r:P\rightarrow Q$ is a morphism of ${\mathcal R}$, we write $r_{QP}$ for the corresponding function symbol for right modules (noting that it has domain sort $Q$ and codomain sort $P$) and $r_{PQ}$ for the corresponding symbol in the language for left ${\mathcal R}$-modules.  Then a pp formula for right modules has the form $$\exists y_{Q_1},\dots ,y_{Q_m} \,\, (x_{P_1},\dots, x_{P_n}, y_{Q_1},\dots ,y_{Q_m}) \left( \begin{array}{ccccc} r_{P_1R_1} & \dots & r_{P_1R_j} & \dots & r_{P_1R_k} \\ \vdots & \ddots & \vdots & \ddots & \vdots \\  r_{P_nR_1} & \dots & r_{P_nR_j} & \dots & r_{P_nR_k} \\ s_{Q_1R_1} & \dots & s_{Q_1R_j} & \dots & s_{Q_1R_k} \\ \vdots & \ddots & \vdots & \ddots & \vdots \\ s_{Q_mR_1} & \dots & s_{Q_mR_j} & \dots & s_{Q_mR_k} \end{array} \right) = (0_{R_1} ,\dots, 0_{R_k}) .$$  The dual of this formula is then  $$\exists z_{R_1},\dots ,z_{R_k} \,\, \left( \begin{array}{cccccccc} 1_{P_1} & \dots & 0 & r_{R_1P_1} & \dots & r_{R_jP_1} & \dots & r_{R_kP_1} \\ \vdots & \ddots & \vdots & \vdots & \ddots & \vdots & \ddots & \vdots \\  0 & \dots & 1_{P_n} & r_{R_1P_n} & \dots & r_{R_jP_n} & \dots & r_{R_kP_n} \\ 0 & \dots & 0 & s_{R_1Q_1} & \dots & s_{R_jQ_1} & \dots & s_{R_kQ_1} \\ \vdots & \ddots & \vdots & \vdots & \ddots & \vdots & \ddots & \vdots \\  0 & \dots & 0 & s_{R_1Q_m} & \dots & s_{R_jQ_m} & \dots & s_{R_kQ_m} \end{array} \right) \left(\begin{array}{c} x_{P_1} \\ \vdots \\ x_{P_n} \\ z_{R_1} \\ \vdots \\ z_{R_k} \end{array}\right) = \left( \begin{array}{c} 0_{P_1} \\ \vdots \\ 0_{P_n} \\ 0_{Q_1} \\ \vdots \\ 0_{Q_m} \end{array} \right) .$$

This duality extends to pp-pairs, taking the pair $\phi/\psi$ to $D\psi/D\phi$, as well as to pp-definable functions between them, reversing their direction (\cite{HerzDual}, see \cite[p.~94ff.]{PreNBK}).  This, in turn, induces the duality bijection between definable subcategories:  if the definable subcategory ${\mathcal D}$ of ${\rm Mod}\mbox{-}{\mathcal R}$ is axiomatised by closure of the set $\{ \phi_\lambda /\psi_\lambda\}_\lambda$ of pp-pairs then its elementary dual ${\mathcal D}^{\rm d}$ is the definable subcategory of ${\mathcal R}\mbox{-}{\rm Mod}$ defined by the set $\{ D\psi_\lambda /D\phi_\lambda\}_\lambda$ of pp-pairs (by what has been said already, this is independent of representation of ${\mathcal D}$ as a definable subcategory).

Many of the results around the (model) theory of flat and absolutely pure modules can be given quick proofs using the following result of Herzog (\cite[3.2]{HerzDual} for rings but the proof is easily modified to the general case).

First, we recall the definition of tensor product of right and left modules over a small preadditive category ${\mathcal R}$ (see, e.g., \cite[pp.~493,494]{PreNBK} for more detail and a worked-through example).  If $M$ is a right ${\mathcal R}$-module then, if $Q$ is an object of ${\mathcal R}$, so $(Q,-)$ is a left ${\mathcal R}$-module then $M\otimes_{\mathcal R}(Q,-)$ is defined to be $MQ$.  Adding the condition that $M\otimes_{\mathcal R}-$ is a right exact functor is then sufficient additional specification because every left module $L$ has a presentation $L_1\rightarrow L_0 \rightarrow L \rightarrow 0$ where $L_1$ and $L_0$ are direct sums of modules of the form $(Q,-)$.

Consider now $(-,P)\otimes_{\mathcal R}(Q,-)$ where $P, Q$ are objects of ${\mathcal R}$.  An element of $(-,P)$ of sort $P'$ is a morphism $P'\xrightarrow{r}P$ and an element of $(Q,-)$ of sort $Q'$ is a morphism $Q\xrightarrow{s}Q'$.  By definition $(-,P)\otimes_{\mathcal R}(Q,-) = (Q,P)$ and so we see that, in order to ``tensor" or compose (to give a morphism $Q\xrightarrow{rs}P$) we must have $P'=Q'$, that is, for ``$r\otimes s$" to make sense, $r$ and $s$ must have the `same' sort.

\begin{theorem}\label{herzcrit}\marginpar{herzcrit}  (Herzog's
Criterion) Suppose that ${\mathcal R}$ is a skeletally small preadditive category.
 Let $\overline{r}$ from $M\in {\rm Mod}\mbox{-}{\mathcal R}$ and $\overline{s}$ from
$N \in {\mathcal R}\mbox{-}{\rm Mod}$ be tuples of the same length and matching sorts (that is, if $r_i$ has sort $P'$ then $s_i$ has sort $P'$).  Then
 $\overline{r}\otimes \overline{s}=0$ in $M\otimes_{\mathcal R} N$ iff there is a
pp condition $\phi$ (for right modules) such that $\overline{r} \in
 \phi(M)$ and $\overline{s}\in D\phi (N)$.
\end{theorem}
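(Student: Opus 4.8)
The plan is to establish the two implications separately. The ``if'' direction is a short formal computation with the defining (bilinearity and balancing) relations of the tensor product; the ``only if'' direction is obtained by reading a suitable pp formula off a finite presentation of $N$, and that is where the real work lies — work which is essentially bookkeeping. Throughout I would follow the ring-case argument of \cite[3.2]{HerzDual}, modifying it only so as to keep track of the sorts of function symbols and of the rows and columns of matrices.

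For ``if'', put $\phi(\overline{x})$ into the normal form $\exists\overline{y}\,(\overline{x}\,\overline{y})\binom{A}{B}=0$ with $A$ having as many rows as $\overline{x}$ has entries, so that, by the explicit description of $D$ in Section \ref{secdual}, $D\phi(\overline{x})$ reads $\exists\overline{z}\,(\overline{x}+A\overline{z}=0\ \wedge\ B\overline{z}=0)$. From $\overline{r}\in\phi(M)$ choose $\overline{r}'$ from $M$ with $\overline{r}A+\overline{r}'B=0$, and from $\overline{s}\in D\phi(N)$ choose $\overline{s}'$ from $N$ with $\overline{s}=-A\overline{s}'$ and $B\overline{s}'=0$ (all of matching sorts, as dictated by the sorts of the entries of $A$ and $B$). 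Iterating the balancing identity $(ac)\otimes n=a\otimes(cn)$, which is part of the definition of $\otimes_{\mathcal R}$, gives $(\overline{r}A)\otimes\overline{s}'=\overline{r}\otimes(A\overline{s}')$, and hence
\[ \overline{r}\otimes\overline{s}=-\,\overline{r}\otimes(A\overline{s}')=-(\overline{r}A)\otimes\overline{s}'=(\overline{r}'B)\otimes\overline{s}'=\overline{r}'\otimes(B\overline{s}')=\overline{r}'\otimes 0=0 . \]

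For ``only if'', first reduce to the case that $N$ is finitely presented: write $N=\varinjlim N_\lambda$ with the $N_\lambda$ finitely presented; since $M\otimes_{\mathcal R}-$ is right exact and preserves coproducts it commutes with direct limits, so the finite tuple $\overline{s}$ is the image of a tuple from some $N_\lambda$ and, as an element of a filtered colimit of abelian groups vanishes iff it vanishes at some stage, $\overline{r}\otimes\overline{s}=0$ already in $M\otimes_{\mathcal R}N_\mu$ for some $\mu\ge\lambda$; because $D\phi$ is a pp formula it is preserved by $N_\mu\to N$, so it is enough to handle $N_\mu$. So assume $N$ has a finite presentation with generators $e_1,\dots,e_l$ of sorts $P_1,\dots,P_l$ and relations $\sum_i c_{ij}e_i=0$ ($j=1,\dots,m$), with $c_{ij}\colon P_i\to R_j$; equivalently $N=\operatorname{coker}\!\big(\bigoplus_j(R_j,-)\xrightarrow{(c_{ij})}\bigoplus_i(P_i,-)\big)$, so right exactness of $M\otimes_{\mathcal R}-$ together with $M\otimes_{\mathcal R}(P,-)=MP$ gives $M\otimes_{\mathcal R}N=\big(\bigoplus_i MP_i\big)\big/\big\{\,(\sum_j a_jc_{ij})_i:a_j\in MR_j\,\big\}$. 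Expressing each $s_k=\sum_i d_{ki}e_i$ with $d_{ki}\colon P_i\to S_k$ (possible as the $e_i$ generate $N$), one computes $\overline{r}\otimes\overline{s}=\sum_i b_i\otimes e_i$ with $b_i=\sum_k r_kd_{ki}\in MP_i$, so that $\overline{r}\otimes\overline{s}=0$ says exactly that there are $a_j\in MR_j$ with $\sum_k r_kd_{ki}=\sum_j a_jc_{ij}$ for every $i$.

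That system of equations says precisely that the tuple $(\overline{r},\overline{a})$ satisfies in $M$ the pp formula $\phi(\overline{x})$ defined by $\exists y_1,\dots,y_m\ \bigwedge_{i=1}^l\big(\sum_k x_kd_{ki}=\sum_j y_jc_{ij}\big)$, with $x_k$ of sort $S_k$ and $y_j$ of sort $R_j$; hence $\overline{r}\in\phi(M)$. It remains to check $\overline{s}\in D\phi(N)$, and this is the step that forces the careful use of the description of $D$: the defining matrix of $\phi$ has as its entries the $d_{ki}$ and the $-c_{ij}$, and dualising it according to Section \ref{secdual} yields that $D\phi(\overline{x})$ is $\exists z_1,\dots,z_l\big(\bigwedge_k x_k=-\sum_i d_{ki}z_i\ \wedge\ \bigwedge_j\sum_i c_{ij}z_i=0\big)$, now in the language of left modules with $z_i$ of sort $P_i$. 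Substituting $z_i:=-e_i$ makes the first block of equations read $s_k=\sum_i d_{ki}e_i$, true by the choice of the $d_{ki}$, and the second block read $\sum_i c_{ij}e_i=0$, the defining relations of $N$; so $\overline{s}\in D\phi(N)$, completing the proof. I expect the main obstacle to be none of these computations individually but rather keeping the sorting of all the function symbols, matrices, variables and tuples consistent throughout — precisely the ``attention to sorting'' flagged in Section \ref{secdual} — with no genuinely new phenomenon beyond the ring case arising.
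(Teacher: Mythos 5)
Your proof is correct, and it is essentially the paper's own approach: the paper gives no argument of its own for this theorem, simply citing Herzog's ring-case proof (\cite[3.2]{HerzDual}) with the remark that it is ``easily modified to the general case''. Your write-up is precisely that modification carried out — the same computation for the ``if'' direction and the same reduction to a finite presentation of $N$ for the ``only if'' direction, with the sort-bookkeeping for the matrices and the dual formula handled correctly.
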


\section{${\mathcal A}({\mathcal R})$, flat and absolutely pure}\label{secARflat}\marginpar{secARflat}

We describe the abelian category ${\mathcal A}({\mathcal R})$, which will be a quotient of the category, ${\mathbb L}^{\rm eq+}_{{\mathcal R}^{\rm op}}$, of pp-imaginaries, first category-theoretically, then in terms of pp formulas.

As always, ${\mathcal R}$ denotes a skeletally small preadditive category.  We have the standard embedding of ${\mathcal R}$ into its category of right ${\mathcal R}$-modules which takes $P \in {\mathcal R}$ to the representable functor/projective right ${\mathcal R}$-module/right ideal of ${\mathcal R}$ $(-,P)$ and a morphism $r:P\rightarrow Q$ to $(-,r):(-,P) \rightarrow (-,Q)$. By the defining property of the free abelian category (see, e.g., \cite[\S 10.2.7]{PreNBK}) this factors through the free abelian category on ${\mathcal R}$.

$\xymatrix{{\mathcal R} \ar[rr] \ar[dr] & & {\rm Ab}({\mathcal R})=({\mathcal R}\mbox{-}{\rm mod},{\bf Ab})^{\rm fp} ={\mathbb L}^{\rm eq+}_{{\mathcal R}^{\rm op}} \ar@{.>}[dl] \\ & {\rm Mod}\mbox{-}{\mathcal R}}$

The functor from $ {\rm Ab}({\mathcal R}) $ to $ {\rm Mod}\mbox{-}{\mathcal R} $ is ``evaluation at $ {\mathcal R}$": in the case of a ring $ R $ it is $ F\mapsto F(_R R) $ and in the general case it takes $ F $ to the functor from $ {\mathcal R} $ to $ {\bf Ab} $ which, at an object $ P $ of $ {\mathcal R} $, has value $ F(P,-) $.  We define $ {\mathcal A}({\mathcal R}) $ to be the image of $ {\rm Ab}({\mathcal R}) $ under this functor; it is an, in general non-full, abelian subcategory of $ {\rm Mod}\mbox{-}{\mathcal R} $.  In fact it is, see \cite[\S6]{PreRajShv}, the smallest abelian, not necessarily full, subcategory of $ {\rm Mod}\mbox{-}{\mathcal R} $ which contains the category, $ {\rm mod}\mbox{-}{\mathcal R}$, of finitely presented modules and so it coincides with $ {\rm mod}\mbox{-}{\mathcal R} $ exactly in the case that $ {\mathcal R} $ is right coherent (\ref{cohringoid}).  The kernel of this evaluation functor is the Serre subcategory $ {\mathcal Z}_{{\mathcal R}}=\{ F\in {\rm Ab}({\mathcal R}): F({\mathcal R})=0\}$; therefore $ {\mathcal A}({\mathcal R}) \simeq {\rm Ab}({\mathcal R})/{\mathcal Z}_{\mathcal R}$.

If we think of ${\rm Ab}({\mathcal R})$ as being the category, ${\mathbb L}^{\rm eq+}_{{\mathcal R}^{\rm op}}$, of pp sorts for left ${\mathcal R}$-modules then the functor from ${\mathbb L}^{\rm eq+}_{{\mathcal R}^{\rm op}}$ to ${\rm Mod}\mbox{-}{\mathcal R}$ takes a pp-pair $\phi/\psi$ for left ${\mathcal R}$-modules to the right module ``$\phi({\mathcal R})/\psi({\mathcal R})$", meaning the functor which takes an object $P$ of ${\mathcal R}$ to $\phi(P,-)/\psi(P,-)$.  The kernel, ${\mathcal Z}_{\mathcal R}$, therefore consists of the functors which are $0$ when evaluated on each of the representable functors $(P,-)$, that is, on a generating set of projective left modules.  By the natural bijection, see e.g.~\cite[12.4.1]{PreNBK}, between Serre subcategories of ${\mathbb L}^{\rm eq+}_{{\mathcal R}^{\rm op}}$ and definable subcategories of ${\mathcal R}\mbox{-}{\rm Mod}$ we deduce that ${\mathcal A}({\mathcal R})$ is the functor category of the definable category, $\langle {\mathcal R}\mbox{-}{\rm Proj}\rangle$, generated by the projective left ${\mathcal R}$-modules.  Since definable categories are closed under directed colimits, this definable subcategory always contains ${\mathcal R}\mbox{-}{\rm Flat}$ and so can be written also as $\langle {\mathcal R}\mbox{-}{\rm Flat}\rangle$.  From this we deduce the following.

\begin{prop}\label{cohexgen}\marginpar{cohexgen} If $ {\mathcal R} $ is any skeletally small preadditive category then $${\rm Ex}({\mathcal A}({\mathcal R}),{\bf Ab})\simeq \langle {\mathcal R}\mbox{-}{\rm Flat}\rangle .$$ If $ {\mathcal R} $ is right coherent, so $  {\mathcal A}({\mathcal R})={\rm mod}\mbox{-}{\mathcal R} $ and $ {\mathcal R}\mbox{-}{\rm Flat} $ is a definable subcategory of $ {\mathcal R}\mbox{-}{\rm Mod}$, then $${\rm Ex}({\rm mod}\mbox{-}{\mathcal R},{\bf Ab})\simeq {\mathcal R}\mbox{-}{\rm Flat}.$$
\end{prop}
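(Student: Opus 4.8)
The plan is to assemble what the discussion immediately preceding the statement has already established, the one fresh ingredient being the general biduality between definable categories and their categories of pp-imaginaries. First I would record the identification made in that discussion: viewing ${\rm Ab}(\mathcal{R})$ as the category $\mathbb{L}^{\rm eq+}_{\mathcal{R}^{\rm op}}$ of pp-sorts for left $\mathcal{R}$-modules, one has $\mathcal{A}(\mathcal{R}) \simeq {\rm Ab}(\mathcal{R})/\mathcal{Z}_\mathcal{R}$ with $\mathcal{Z}_\mathcal{R}$ the Serre subcategory of those functors that vanish on every representable $(P,-)$; under the bijection between Serre subcategories of $\mathbb{L}^{\rm eq+}_{\mathcal{R}^{\rm op}}$ and definable subcategories of $\mathcal{R}\mbox{-}{\rm Mod}$ (see \ref{ppsortquotpp} and \cite{PreNBK}), this quotient is $\mathbb{L}^{\rm eq+}(\langle\mathcal{R}\mbox{-}{\rm Proj}\rangle) = {\rm fun}(\langle\mathcal{R}\mbox{-}{\rm Proj}\rangle)$, and $\langle\mathcal{R}\mbox{-}{\rm Proj}\rangle = \langle\mathcal{R}\mbox{-}{\rm Flat}\rangle$ because definable categories are closed under directed colimits while every flat left module is a directed colimit of (finitely generated) projectives by the ringoid analogue of \ref{charflat}(iii). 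So $\mathcal{A}(\mathcal{R}) \simeq {\rm fun}(\langle\mathcal{R}\mbox{-}{\rm Flat}\rangle)$ as abelian categories.

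Next I would invoke biduality: for any definable category $\mathcal{D}$ one has ${\rm fun}(\mathcal{D}) = \mathbb{L}^{\rm eq+}(\mathcal{D})$ (\ref{equiv3} and the remarks following it), and the category of pp-imaginaries of ${\rm Ex}(\mathcal{A},{\bf Ab})$ is $\mathcal{A}$ for any skeletally small abelian category $\mathcal{A}$ (\cite{PreADC}, \cite{PreNBK}), so that ${\rm Ex}(\mathbb{L}^{\rm eq+}(\mathcal{D}),{\bf Ab}) \simeq \mathcal{D}$. Applying this with $\mathcal{D} = \langle\mathcal{R}\mbox{-}{\rm Flat}\rangle$ and feeding in the identification above gives
$${\rm Ex}(\mathcal{A}(\mathcal{R}),{\bf Ab}) \simeq {\rm Ex}\big(\mathbb{L}^{\rm eq+}(\langle\mathcal{R}\mbox{-}{\rm Flat}\rangle),{\bf Ab}\big) \simeq \langle\mathcal{R}\mbox{-}{\rm Flat}\rangle ,$$
which is the first assertion.

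For the second assertion I would specialise to right coherent $\mathcal{R}$, using two already-recorded facts: by \ref{cohringoid}, ${\rm mod}\mbox{-}\mathcal{R}$ is then an abelian subcategory of ${\rm Mod}\mbox{-}\mathcal{R}$, so $\mathcal{A}(\mathcal{R}) = {\rm mod}\mbox{-}\mathcal{R}$ (as noted before the statement); and, by the ringoid form of the Eklof--Sabbagh theorem (\cite{EkSab}, \cite{SabEk}, extended as in Section \ref{secflatRR}), right coherence of $\mathcal{R}$ is precisely what makes $\mathcal{R}\mbox{-}{\rm Flat}$ closed under direct products, hence a definable subcategory of $\mathcal{R}\mbox{-}{\rm Mod}$, so $\langle\mathcal{R}\mbox{-}{\rm Flat}\rangle = \mathcal{R}\mbox{-}{\rm Flat}$. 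Substituting both identities into the first assertion yields ${\rm Ex}({\rm mod}\mbox{-}\mathcal{R},{\bf Ab}) \simeq \mathcal{R}\mbox{-}{\rm Flat}$.

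I do not expect any genuine obstacle: the statement is essentially a repackaging of the preceding paragraph together with the biduality theorem. The one point deserving a sentence of justification rather than pure bookkeeping is the equality $\langle\mathcal{R}\mbox{-}{\rm Proj}\rangle = \langle\mathcal{R}\mbox{-}{\rm Flat}\rangle$, which uses the ringoid version of the Lazard--Govorov description of flat modules as directed colimits of finitely generated projectives; this goes through exactly as for modules over a ring (cf.\ Section \ref{secflatRR}), and in any case the chain $\mathcal{R}\mbox{-}{\rm Proj} \subseteq \mathcal{R}\mbox{-}{\rm Flat} \subseteq \langle\mathcal{R}\mbox{-}{\rm Proj}\rangle$ already forces the two definable closures to coincide.
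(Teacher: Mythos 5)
Your argument is correct and is essentially the paper's own: the paper derives the proposition from exactly the chain you describe, namely ${\mathcal A}({\mathcal R})\simeq {\rm Ab}({\mathcal R})/{\mathcal Z}_{\mathcal R}$, the Serre-subcategory/definable-subcategory correspondence identifying this quotient with ${\rm fun}(\langle{\mathcal R}\mbox{-}{\rm Proj}\rangle)=\mathbb{L}^{\rm eq+}(\langle{\mathcal R}\mbox{-}{\rm Flat}\rangle)$, closure of definable categories under directed colimits to pass from projectives to flats, and the biduality ${\mathcal D}\simeq {\rm Ex}(\mathbb{L}^{\rm eq+}({\mathcal D}),{\bf Ab})$, with the coherent case following by substituting ${\mathcal A}({\mathcal R})={\rm mod}\mbox{-}{\mathcal R}$ and $\langle{\mathcal R}\mbox{-}{\rm Flat}\rangle={\mathcal R}\mbox{-}{\rm Flat}$.
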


That is, ${\mathcal A}(R)$ is the category of pp-imaginaries for the theory of the left $R$-module $R$, more generally, for the theory of flat left ${\mathcal R}$-modules.  It consists of those right ${\mathcal R}$-modules which can be written in the form $\phi({\mathcal R})/\psi({\mathcal R})$ (in the meaning we gave to this above) for some pair of pp formulas $\phi/\psi$ for left ${\mathcal R}$-modules.  These modules already appear in \cite{GarkGendual}, \cite{Garkrelhom} and \cite{WilkThes}.  It is shown as \cite[6.4]{PreRajShv} that the objects of ${\mathcal A}({\mathcal R})$ are exactly the kernels of morphisms between finitely presented modules.  We note that this class (which in general can include infinitely generated modules) includes every finitely generated submodule of a finitely presented module since if we have an exact sequence $ 0\rightarrow B'\rightarrow B\rightarrow B''\rightarrow 0 $ with both $ B $ and $ B'' $ in $ {\rm mod}\mbox{-}{\mathcal R} $ then, by \cite[6.4]{PreRajShv} the kernel, $ B'$, is in $ {\mathcal A}({\mathcal R})$.

Since evaluation is exact, one sees that, given a right module $M\in {\mathcal A}({\mathcal R})$, the corresponding sort for a left module $L \in \langle {\mathcal R}\mbox{-}{\rm Flat}\rangle$ is $M\otimes_{\mathcal R}L=0$.

Now, we turn to the opposite category, $ {\mathcal A}({\mathcal R})^{\rm op} $, which also is abelian and is the pp-imaginaries category for the definable category elementary-dual to $ \langle {\mathcal R}\rangle $. That, by \cite[4.1]{PRZ1} (for the ring case), is $ \langle {\rm Abs}\mbox{-}{\mathcal R}\rangle =\langle {\rm Inj}\mbox{-}{\mathcal R}\rangle $, so we have the following.

\begin{prop}\label{cohexgendual}\marginpar{cohexgendual} If $ {\mathcal R} $ is any skeletally small preadditive category then $${\rm Ex}({\mathcal A}({\mathcal R})^{\rm op},{\bf Ab})\simeq \langle  {\rm Abs}\mbox{-}{\mathcal R}\rangle  .$$ If $ {\mathcal R} $ is right coherent, so $ {\rm Abs}\mbox{-}{\mathcal R}$ is a definable subcategory of $ {\rm Mod}\mbox{-}{\mathcal R}$, then $${\rm Ex}(({\rm mod}\mbox{-}{\mathcal R})^{\rm op},{\bf Ab})\simeq {\rm Abs}\mbox{-}{\mathcal R}.$$
\end{prop}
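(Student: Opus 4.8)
The plan is to deduce this from Proposition \ref{cohexgen} by applying elementary duality. Recall first that, for any skeletally small abelian category $ {\mathcal A} $, the assignments $ {\mathcal A}\mapsto {\rm Ex}({\mathcal A},{\bf Ab}) $ and $ {\mathcal D}\mapsto {\mathbb L}^{\rm eq+}({\mathcal D}) $ are mutually inverse, up to equivalence, between skeletally small abelian categories and definable categories. So Proposition \ref{cohexgen} says precisely that $ {\mathcal A}({\mathcal R}) $ is, up to equivalence, the category $ {\mathbb L}^{\rm eq+}(\langle {\mathcal R}\mbox{-}{\rm Flat}\rangle) $ of pp-imaginaries of the definable category $ \langle {\mathcal R}\mbox{-}{\rm Flat}\rangle $ (which, as noted there, coincides with $ \langle {\mathcal R}\mbox{-}{\rm Proj}\rangle $).

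Next I invoke the behaviour of pp-imaginaries under elementary duality from Section \ref{secdual}: for a dual pair $ {\mathcal D},{\mathcal D}^{\rm d} $ one has $ {\mathbb L}^{\rm eq+}({\mathcal D}^{\rm d})\simeq ({\mathbb L}^{\rm eq+}({\mathcal D}))^{\rm op} $. Taking $ {\mathcal D}=\langle {\mathcal R}\mbox{-}{\rm Flat}\rangle $ gives $ {\mathbb L}^{\rm eq+}(\langle {\mathcal R}\mbox{-}{\rm Flat}\rangle^{\rm d})\simeq {\mathcal A}({\mathcal R})^{\rm op} $, and applying $ {\rm Ex}(-,{\bf Ab}) $ to both sides (using the reconstruction $ {\mathcal E}\simeq {\rm Ex}({\mathbb L}^{\rm eq+}({\mathcal E}),{\bf Ab}) $ for a definable category $ {\mathcal E} $) yields $ \langle {\mathcal R}\mbox{-}{\rm Flat}\rangle^{\rm d}\simeq {\rm Ex}({\mathcal A}({\mathcal R})^{\rm op},{\bf Ab}) $. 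Thus the first assertion reduces to the identification $ \langle {\mathcal R}\mbox{-}{\rm Flat}\rangle^{\rm d}\simeq \langle {\rm Abs}\mbox{-}{\mathcal R}\rangle $.

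That identification — that elementary duality interchanges the definable category generated by the flat left modules with the one generated by the absolutely pure right modules — is the substantive input, and is where I expect the main work to lie. For a ring it is \cite[4.1]{PRZ1}. The underlying mechanism is the duality $D$ on pp-pairs: $ \langle {\mathcal R}\mbox{-}{\rm Flat}\rangle $ is axiomatised by the pp-pairs for left modules that are closed on every representable $ (P,-) $, i.e.~those $ \phi/\psi $ with $ \phi({\mathcal R})=\psi({\mathcal R}) $, and $D$ carries exactly this family to a family of pp-pairs for right modules axiomatising $ \langle {\rm Abs}\mbox{-}{\mathcal R}\rangle $ — the characterisation of absolute purity in Theorem \ref{abspurchar} (paired with the flatness characterisation Theorem \ref{flatchar}), together with Herzog's Criterion \ref{herzcrit}, being precisely what forces this to match up. The passage from a ring $R$ to a preadditive category $ {\mathcal R} $ is routine in the manner of Section \ref{secflatRR}, the relevant characterisations carrying over verbatim. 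I also record the easy equality $ \langle {\rm Abs}\mbox{-}{\mathcal R}\rangle=\langle {\rm Inj}\mbox{-}{\mathcal R}\rangle $: injectives are absolutely pure, while by Theorem \ref{charabs}(ii) every absolutely pure module is a pure submodule of an injective one, so each class generates the other's definable subcategory.

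For the second assertion, assume $ {\mathcal R} $ is right coherent. Then $ {\mathcal A}({\mathcal R})={\rm mod}\mbox{-}{\mathcal R} $ (as noted before Proposition \ref{cohexgen}), and, dually to the Eklof--Sabbagh result recalled in the introduction — equivalently, because products of absolutely pure modules are again absolutely pure precisely when $ {\mathcal R} $ is right coherent, the other closure properties of a definable subcategory holding automatically for $ {\rm Abs}\mbox{-}{\mathcal R} $ — the class $ {\rm Abs}\mbox{-}{\mathcal R} $ is itself definable, so $ \langle {\rm Abs}\mbox{-}{\mathcal R}\rangle={\rm Abs}\mbox{-}{\mathcal R} $. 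Substituting these two identifications into the first assertion gives $ {\rm Ex}(({\rm mod}\mbox{-}{\mathcal R})^{\rm op},{\bf Ab})\simeq {\rm Abs}\mbox{-}{\mathcal R} $. Apart from the duality input of the third paragraph, everything is formal bookkeeping with the abelian-category/definable-category correspondence.
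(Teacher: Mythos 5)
Your proposal is correct and follows essentially the same route as the paper: the paper likewise obtains the result by combining Proposition \ref{cohexgen} with the fact that $ {\mathbb L}^{\rm eq+} $ of an elementary dual is the opposite category (Section \ref{secdual}), and then identifies the dual of $ \langle {\mathcal R}\mbox{-}{\rm Flat}\rangle $ with $ \langle {\rm Abs}\mbox{-}{\mathcal R}\rangle=\langle {\rm Inj}\mbox{-}{\mathcal R}\rangle $ by citing \cite[4.1]{PRZ1} for the ring case and treating the extension to preadditive $ {\mathcal R} $ as routine, with the coherent case following exactly as in your last paragraph. Your extra remarks (the $D$-duality mechanism and the equality $ \langle {\rm Abs}\mbox{-}{\mathcal R}\rangle=\langle {\rm Inj}\mbox{-}{\mathcal R}\rangle $) merely flesh out details the paper leaves implicit.
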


We finish by observing that in general $ {\mathcal A}({\mathcal R}^{\rm op})\not\simeq {\mathcal A}({\mathcal R})^{\rm op}$, indeed there are rings $ R $ (such as ${\mathbb Z}$) which are right and left coherent such that $ {\rm mod}\mbox{-}R^{\rm op}\not\simeq ({\rm mod}\mbox{-}R)^{\rm op} $.

\end{document}